\documentclass[12pt]{amsart}

\usepackage{amsmath, amssymb, amsfonts}
\usepackage{latexsym}
\usepackage{color}
\usepackage{tikz}
\usepackage{cases}
\usepackage{systeme}
\usepackage{breqn}
\usepackage{graphics}
\usepackage{bbm}
\usepackage[utf8]{inputenc} 
\usepackage[T1]{fontenc}
\usepackage{xcolor}
\usepackage{float}
\usepackage{srcltx}
\usepackage{enumerate}
\usepackage{esint}
\usepackage[shortlabels]{enumitem}
\usepackage{hyperref}
\usepackage[capitalise]{cleveref}

\makeatletter
\@namedef{subjclassname@2020}{%
  \textup{2020} Mathematics Subject Classification}
\makeatother

\newtheorem{theorem}{Theorem}[section]
\newtheorem{lemma}[theorem]{Lemma}
\newtheorem{corollary}[theorem]{Corollary}
\newtheorem{proposition}[theorem]{Proposition}

\newtheorem{conjecture}[theorem]{Conjecture}

\theoremstyle{definition}
\newtheorem{definition}[theorem]{Definition}

\theoremstyle{remark}
\newtheorem{remark}[theorem]{Remark}
\DeclareMathOperator{\Tr}{Tr}

\numberwithin{equation}{section}
\numberwithin{equation}{section}

\def \bk {\color{black}}
\def \rd {\color{black}}
\def \bll {\color{black}}

\def \bl {\color{black}}

\def \a {\alpha}
\def \b {\beta}

\begin{document}

\title{On Weil Sums, Conjectures of Helleseth, and Niho Exponents}

\author{Liem Nguyen}
\address{Department of Mathematics\\
Louisiana State University\\
Baton Rouge, Louisiana, 70808}
\email{lngu145@lsu.edu}

\subjclass[2020]{11T23, 11T24}
\date{\today}
\keywords{Weil sum, character sum, Helleseth conjecture, finite field, Niho exponent}
\thanks{The author is partially supported by NSF DMS \#$1602047$.}

\maketitle

\begin{abstract}
Let $F$ be a  finite field,  $\mu$ be a fixed additive character and $s$ be an integer coprime to $|F^\times|$. For any $a\in F$, the corresponding Weil sum is defined to be $W_{F,s}(a)=\displaystyle\sum_{x \in F} \mu(x^s-ax)$. The Weil spectrum counts distinct values of the Weil sum as $a$ runs through the invertible elements in the finite field. The value of these sums and the size of the Weil spectrum are of particular interest, as they link problems in coding and information theory to other areas  of  math  such  as  number  theory  and  arithmetic  geometry. \bll In the setting of Niho exponents, we examine the Weil sum, its bounds and its spectrum. As a consequence, we give a new proof to the Vanishing Conjecture of Helleseth ($1971$) on the presence of zero in the Weil spectrum in the case of Niho exponents. \bk We also state a conjecture for when the Weil spectrum contains at least five elements, and prove it for a certain class of Weil sums.
\end{abstract}


\section{Introduction}
\label{introduction}
\subsection{\bl The Weil sum} \bk

Let $F$ be a finite field of characteristic $p$ and size $q=p^n$. Let $\mu: F \to \mathbb{C}$ be the canonical additive character. Recall the canonical additive character $\mu (x)= \zeta_p^{\Tr_{F/\mathbb{F}_p}(x)}$, where $\zeta_p=e^{2\pi i/p}$ is a $p$th root of unity and $\Tr_{F/\mathbb{F}_p}(x)$ is the absolute trace function from $F \to \mathbb{F}_p$. If $L$ is an extension of $F$, i.e $|L|=q^m$ for some nonnegative integer $m$, then \bl we take \bk $\mu(x)=\zeta_p^{\Tr_{F/\mathbb{F}_p}(\Tr_{L/F}(x))}$ where $\Tr_{L/F}(x)$ is the trace function from $L \to F$.  

Let $x \mapsto x^s$ be a power mapping from $F$ to $F$ where $s$ is a fixed positive integer. For a point $a \in F$, we define the Weil sum as
\[
W_{F,s}(a)=\sum_{x \in F}\mu(x^s-ax).
\]
If $\gcd(s,q-1)=1$, then $s$ is said to be an \emph{invertible exponent} \bl over $F$. \bk Furthermore, 
\begin{align*}\label{W(0)}
W_{F,s}(0)=\sum_{x \in F} \mu(x^s)= \sum_{x \in F} \mu(x)=0,
\end{align*}
since the map $x \mapsto x^s$ permutes the elements of $F$.

Since $W_{F,s}(a)$ is a sum of roots of unity, $W_{F,s}(a)$ is an algebraic integer. The following theorem by Tor Helleseth states the necessary and sufficient conditions for $W_{F,s}(a)$ to be a rational integer for every $a\in F$ \cite{tor}.

\begin{theorem}[Helleseth]
$W_{F,s}(a) \in \mathbb{Z}$ for all $a \in F^\times$ if and only if $s \equiv 1 \pmod{p-1}$. 
\end{theorem}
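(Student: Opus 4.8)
The plan is to use the Galois action of $\mathrm{Gal}(\mathbb{Q}(\zeta_p)/\mathbb{Q})\cong(\mathbb{Z}/p\mathbb{Z})^\times$ on $W_{F,s}(a)$, which is an algebraic integer in $\mathbb{Z}[\zeta_p]$; being a sum of $p$th roots of unity, it is a rational integer exactly when it is fixed by every automorphism $\sigma_t\colon\zeta_p\mapsto\zeta_p^t$. Since $\Tr_{F/\mathbb{F}_p}$ is $\mathbb{F}_p$-linear, $\sigma_t(\mu(y))=\mu(ty)$ for $t\in\mathbb{F}_p^\times\subseteq F^\times$. Writing $\bar s$ for the inverse of $s$ modulo $q-1$ (available because $\gcd(s,q-1)=1$), so that $(t^{-\bar s})^s=t^{-1}$, the substitution $x\mapsto t^{-\bar s}x$ in $\sigma_t(W_{F,s}(a))=\sum_{x\in F}\mu(tx^s-tax)$ gives the key identity
\[
\sigma_t\bigl(W_{F,s}(a)\bigr)=W_{F,s}\bigl(t^{1-\bar s}a\bigr).
\]
Thus $W_{F,s}(a)\in\mathbb{Z}$ for all $a\in F^\times$ if and only if $W_{F,s}$ is constant on the orbits of the subgroup $H:=\{t^{1-\bar s}:t\in\mathbb{F}_p^\times\}\leq\mathbb{F}_p^\times$ acting on $F^\times$ by multiplication.

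The forward implication is then immediate: if $s\equiv1\pmod{p-1}$ then $\bar s\equiv1\pmod{p-1}$, so $(p-1)\mid(1-\bar s)$, hence $t^{1-\bar s}=1$ for every $t\in\mathbb{F}_p^\times$, the group $H$ is trivial, and every $\sigma_t$ fixes $W_{F,s}(a)$; being a rational algebraic integer, $W_{F,s}(a)\in\mathbb{Z}$. (The case $p=2$ is vacuous since $\mathbb{Z}[\zeta_2]=\mathbb{Z}$.)

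For the converse, suppose $s\not\equiv1\pmod{p-1}$, so that $\gcd(1-\bar s,p-1)<p-1$ and $|H|\geq2$. I would pick a multiplicative character $\chi$ of $F^\times$ whose restriction to $H$ is nontrivial — possible because restriction from the character group of $F^\times$ to that of $H$ is surjective — and study the twist $S_\chi:=\sum_{a\in F^\times}\overline{\chi}(a)\,W_{F,s}(a)$. Expanding $W_{F,s}$, discarding the $x=0$ term (which vanishes because $\overline{\chi}$ is nontrivial), and performing the changes of variable $a\mapsto ax^{-1}$ and then $x\mapsto x^{\bar s}$ collapses $S_\chi$ into a product of Gauss sums:
\[
S_\chi=\overline{\chi}(-1)\,g(\overline{\chi})\,g(\chi^{\bar s}),\qquad g(\psi):=\sum_{x\in F^\times}\psi(x)\mu(x).
\]
Both characters here are nontrivial: $\overline{\chi}$ by construction, and $\chi^{\bar s}$ because otherwise $\mathrm{ord}(\chi)\mid\bar s$, forcing $\mathrm{ord}(\chi|_H)\mid\bar s$, which is impossible since $\mathrm{ord}(\chi|_H)>1$ divides $p-1$ while $\gcd(\bar s,p-1)=1$. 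Hence $S_\chi$ is a product of two nonzero Gauss sums, so $S_\chi\neq0$. But if $W_{F,s}$ were constant on the cosets of $H$, then grouping $S_\chi$ by these cosets and using $\sum_{h\in H}\overline{\chi}(h)=0$ would give $S_\chi=0$. This contradiction shows $W_{F,s}$ is not $H$-invariant, i.e. $W_{F,s}(a)\notin\mathbb{Z}$ for some $a\in F^\times$.

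I expect the converse to be the crux: the Galois identity and the forward direction are bookkeeping, but producing a witness to irrationality needs the right test character together with the observation that coprimality of $\bar s$ to $p-1$ keeps $\chi^{\bar s}$ nontrivial, so that the Gauss-sum product in $S_\chi$ cannot collapse.
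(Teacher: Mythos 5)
Your argument is correct. Note, however, that the paper does not prove this statement at all: it is quoted as a known theorem of Helleseth with a citation to \cite{tor}, so there is no in-paper proof to compare against. What you have written is essentially the classical argument from the literature. The identity $\sigma_t(W_{F,s}(a))=W_{F,s}(t^{1-\bar s}a)$ is exactly the Galois-conjugation formula recorded, e.g., in Katz's survey (Theorem 2.1 of \cite{katz}), and it immediately gives the ``if'' direction. For the ``only if'' direction, your reduction to non-invariance under the subgroup $H=\{t^{1-\bar s}:t\in\mathbb{F}_p^\times\}$ and the evaluation $S_\chi=\overline{\chi}(-1)\,g(\overline{\chi})\,g(\chi^{\bar s})$ are both sound: the $x=0$ term does vanish, the substitutions $a\mapsto ax^{-1}$ and $x\mapsto x^{\bar s}$ are legitimate bijections, the nontriviality of $\chi^{\bar s}$ follows from $\gcd(\bar s,p-1)=1$ (which in turn follows from $\gcd(s,q-1)=1$ and $(p-1)\mid(q-1)$), and $|g(\psi)|=\sqrt{q}$ for nontrivial $\psi$ forces $S_\chi\neq 0$, contradicting the orthogonality computation under the assumed $H$-invariance. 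The only points worth stating explicitly in a written version are (i) that a character of $F^\times$ nontrivial on $H$ exists because characters of a subgroup of a finite abelian group always extend, and (ii) that $s\not\equiv 1\pmod{p-1}$ implies $\bar s\not\equiv 1\pmod{p-1}$ because $s\bar s\equiv 1\pmod{p-1}$ --- both of which you use correctly. In short: the proof is complete and matches the standard route; it is simply external to this paper.
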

Now, $s$ is said to be \emph{singular} if there is an $a \in F^\times$ such that $W_{F,s}(a)=0$.

\bl

The Weil sum is of interest to us as it relates many problems in coding and information theory to other areas such as number theory and arithmetic geometry. Properties of the Weil sum including its values, number of values over the finite field, and its bounds are still not well understood. These aspects are certainly of interest from a purely number-theoretic standpoint. On the other hand, in sequence design in information theory, the cross-correlation function between two $p$-ary maximal linear recursive sequences measures how similar they are, and can be realized as a character sum, specifically as the Weil sum plus $(-1)$. One important criterion that makes such sequences useful in remote sensing and communications is that they should have low cross-correlation (see \cite{tor, Niho1972MultiValuedCF, Tormaster, sarwate, games, sawarteconjecture, Calderbank3value, WelchConjecture, WelchNihoConjecture}).

In coding theory, \rd a \bk $p$-ary m-sequence (by restricting to a finite subsequence) can be regarded as a codeword in \rd a \bk $p$-ary cyclic code. We can define the Hamming weight of a codeword to measure the number of substitutions to change one string to another. We can relate this weight to the cross-correlation function, which in turn relates to the Weil sum. We are interested in the distinct values of these weights, and the number of the weights, and hence the corresponding properties of the Weil sum (see \cite{mcguire, charpin, feng} for more details).

For a good summary of various research aspects of the Weil sum, as well as their applications, see \cite[Appendix]{katz} and \cite{Katz2018WeilSO} by Daniel Katz.

\bk 

\subsection{\bl Properties and Helleseth Conjectures} \bk

It is natural to wonder what kind of value one would get from the Weil sum. We have seen that the $W_{F,s}(a)$ is always $0$ at $a=0$, and interestingly, this presence of zero value is not known for nonzero elements $a$. This prompted Tor Helleseth to propose the following conjecture \cite{Tormaster,tor} in $1971$.   

\begin{conjecture}[Helleseth Vanishing Conjecture]
If $q=|F| > 2$ and $s$ is an invertible exponent over $F$ such that
$s \equiv 1 \pmod{p-1}$, then $s$ is singular.
\end{conjecture}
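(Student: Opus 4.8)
The plan is to attack the conjecture in full generality by recasting \enquote{vanishing} as a balancedness problem and then controlling the whole Weil spectrum through power moments reinforced by $p$-adic valuation estimates. Writing $\mu(y)=\zeta_p^{\Tr_{F/\mathbb{F}_p}(y)}$, the Weil sum $W_{F,s}(a)$ is the Fourier transform, at frequency $a$, of the unit-modulus function $x\mapsto\mu(x^s)$ on the additive group $(F,+)$. Hence $W_{F,s}(a)=0$ is equivalent to the assertion that the $\mathbb{F}_p$-valued map $x\mapsto\Tr_{F/\mathbb{F}_p}(x^s-ax)$ is \emph{balanced}, taking each value of $\mathbb{F}_p$ exactly $q/p$ times. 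So proving the conjecture amounts to showing that for every admissible $s$ there exists $a\in F^\times$ making $x\mapsto\Tr_{F/\mathbb{F}_p}(x^s-ax)$ balanced; I would establish this by showing that the number of $a\in F^\times$ with $W_{F,s}(a)=0$ is strictly positive.

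First I would assemble the archimedean constraints. Since $s$ is invertible the map $x\mapsto x^s$ permutes $F$, so $W_{F,s}(0)=0$, and a direct swap of summation gives the power moments $\sum_{a\in F}W_{F,s}(a)=q$ and $\sum_{a\in F}W_{F,s}(a)^2=q^2$ (here $s\equiv1\pmod{p-1}$ forces $s$ odd when $p$ is odd, and by Helleseth's theorem it also guarantees $W_{F,s}(a)\in\mathbb{Z}$), together with a third moment $\sum_{a\in F}W_{F,s}(a)^3=q\,T_s$ for an explicit cubic character sum $T_s$ over the plane $x+y+z=0$. Combined with the Weil bound $|W_{F,s}(a)|\le(s-1)\sqrt{q}$, these constrain the multiset of values. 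If the spectrum avoided $0$ altogether, the frequencies $n_i$ of the distinct nonzero values $v_i$ would have to satisfy $\sum_i n_i=q-1$, $\sum_i n_iv_i=q$, and $\sum_i n_iv_i^2=q^2$ with all $v_i$ nonzero integers; the aim is to derive a contradiction.

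The decisive extra ingredient is $p$-adic. Expanding $W_{F,s}(a)$ for $a\neq0$ in Gauss sums attached to $x\mapsto x^s$ and applying Stickelberger's congruence (equivalently Gross--Koblitz), the $p$-adic valuation of each term is governed by base-$p$ digit sums of multiples of $s$, and the hypothesis $s\equiv1\pmod{p-1}$ produces strong congruences on $W_{F,s}(a)$ that are uniform in $a$. I would feed these congruences into the moment identities to force the value $0$ into the spectrum: the goal is to show that an integer-valued spectrum with the prescribed total mass, first and second moments, and valuation profile cannot be realized by nonzero integers alone. Throughout I would exploit the Frobenius symmetry $W_{F,s}(a)=W_{F,s}(a^p)$, together with the scaling relation $W_{F,s}(a)=0\iff W_{F,s}(a c^{(s-1)s'})=0$ for all $c\in\mathbb{F}_p^\times$ (where $s'\equiv s^{-1}\pmod{q-1}$), so that the zero set is a union of orbits and the counting can be done orbit by orbit.

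The hard part, and the reason the statement resists a uniform proof, is everything beyond the second moment: the cubic sum $T_s$ is itself a Weil sum on a surface whose value depends delicately on the arithmetic of $s$, and for a general invertible exponent there is neither a closed evaluation nor a bound sharp enough to close the argument. Equivalently, for large $s$ the Weil bound already permits on the order of $\sqrt{q}$ distinct values, so the low-order moments badly under-determine the spectrum and the valuation congruences alone do not localize the zero. Overcoming this requires structural control of the permutation $x\mapsto x^s$ strong enough either to bound the number of distinct values or to evaluate $T_s$; the route I expect to carry the argument through unconditionally is to first secure the needed estimate for exponent families in which $x^s$ interacts with a subfield tower, so that $\Tr_{F/\mathbb{F}_p}(x^s-ax)$ partially linearizes and the spectrum collapses to few values, and then to propagate the conclusion toward arbitrary $s\equiv1\pmod{p-1}$. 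I anticipate that this structural step, rather than the moment bookkeeping, is where the genuine difficulty lies.
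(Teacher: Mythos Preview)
The statement you are addressing is an open \emph{conjecture}; the paper does not prove it in full generality, and neither does your proposal. You yourself identify the gap: beyond the second moment, you have no control of the spectrum for a general invertible $s\equiv 1\pmod{p-1}$, and the final paragraph concedes that the ``structural step'' required to reduce an arbitrary exponent to a tractable subfield situation is not carried out. The moment and Stickelberger framework you describe is a reasonable heuristic, but as written it is an outline of a program rather than a proof, and no argument is supplied for the decisive propagation step from subfield-interacting exponents to all $s$.

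What the paper actually proves is the special case of Niho exponents over $L=\mathbb{F}_{p^{2n}}$ (Theorem~\ref{vanishingp^2n}), and there the mechanism is far simpler than your $p$-adic/Stickelberger route. The key input is Lemma~\ref{divisibilityp^nlemma}: for a Niho exponent the trace condition $\Tr_{L/F}(x^s-ax)=0$ is $F^\times$-equivariant, which forces $W_{L,s}(a)=p^n h_a$ with $h_a\in\mathbb{Z}$. Feeding this into the second power moment alone gives $\sum_{a\in L^\times}h_a^2=p^{2n}$, and an elementary pigeonhole argument (if every $h_a\neq 0$ then the sum is either $p^{2n}-1$ or at least $p^{2n}+2$) yields the vanishing. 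No third moment, no Gauss-sum expansion, and no digit-sum valuations are needed. Your proposal, even restricted to the Niho case, would be a substantial detour compared to this two-line endgame; the point you are missing is that the Niho hypothesis already supplies divisibility by $p^n$ directly via the coset structure of $L^\times/F^\times$, which is much stronger than anything Stickelberger gives for a generic exponent.
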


Now, if we put some restrictions on the exponent $s$, some partial results on the Vanishing Conjecture can be obtained. For the finite field $L$ of order $q=p^{2n}$, an exponent $s$ is called a \emph{Niho exponent} if $s$ is not a power of $p \bl \pmod{p^{2n}-1}$ \bk and $s \equiv p^j \pmod{p^n-1}$. If $j=0$, then such exponent is called a \emph{normalized Niho exponent}. Niho exponents \bl were \bk first introduced by Yoji Niho in $1972$ in his PhD thesis on the cross-correlation function between an $m$-sequence and its $d$-decimation \cite{Niho1972MultiValuedCF}. Since then further research has been done using Niho exponents, and it has resulted in various applications in coding theory, sequence design and cryptography \cite{Nihoexponent}. Moreover, the Helleseth Vanishing Conjecture was proved for Niho exponents for a field of characteristic $2$ in \cite{charpin} \bll and this was generalized for Niho exponents in all characteristics in \cite[Proof of Theorem $9$]{helleseth_lahtonen_rosendahl}. 

\bk

One useful fact about Weil sums with Niho exponents is that we can replace them with normalized Niho exponents due to the following result discussed in Aubry, Katz and Langevin paper \cite{cyclotomy}.

\begin{lemma}\cite[Lemma $3.2$]{cyclotomy}\label{powerlemma}
Let $F$ be a finite field of characteristic $p$ and $s$ be an invertible exponent over $F$.
Then $W_{F,s}(a)=W_{F,p^j s}(a)$ for any $a \in F$ and $j \in \mathbb{Z}$.
\end{lemma}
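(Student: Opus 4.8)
The plan is to prove the statement for $j=1$ — that is, $W_{F,s}(a)=W_{F,ps}(a)$ — and then bootstrap to all $j\in\mathbb{Z}$. The two ingredients are that $\mu$ is a homomorphism from $(F,+)$ to $\mathbb{C}^\times$, so $\mu(u+v)=\mu(u)\mu(v)$, and that $\mu$ is invariant under the absolute Frobenius $x\mapsto x^p$: writing $q=p^n$, one has $\Tr_{F/\mathbb{F}_p}(x)=\sum_{i=0}^{n-1}x^{p^i}$, which is visibly unchanged by $x\mapsto x^p$ (the summands are cyclically permuted, using $x^{p^n}=x$), hence $\mu(z^p)=\mu(z)$ for every $z\in F$. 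The same remark applies to the character attached to an extension $L$, since it is again $\zeta_p$ raised to an absolute trace.

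For $j=1$ I would argue termwise. For each $x\in F$, write $x^{ps}-ax=x^{ps}+(-ax)$ and use additivity of $\mu$, then $x^{ps}=(x^s)^p$ together with Frobenius-invariance:
\[
\mu(x^{ps}-ax)=\mu\big((x^s)^p\big)\,\mu(-ax)=\mu(x^s)\,\mu(-ax)=\mu(x^s-ax).
\]
Summing over $x\in F$ yields $W_{F,ps}(a)=W_{F,s}(a)$. (Note that invertibility of $s$ plays no role here, since $x^{pt}=(x^t)^p$ for every integer $t\ge 0$.) Iterating, $W_{F,p^{j}s}(a)=W_{F,p^{j-1}s}(a)=\cdots=W_{F,s}(a)$ for every $j\ge 0$.

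For $j<0$ I would pass to positive exponents using $p^n=q\equiv 1\pmod{q-1}$: given any $j$, choose $m\ge 0$ with $j+mn\ge 0$, so that $p^{j}s\equiv p^{j+mn}s\pmod{q-1}$; since the power map $x\mapsto x^m$ on $F$ depends only on $m\bmod(q-1)$ for $m\ge 1$, the two exponents define the same Weil sum, and $W_{F,p^{j}s}(a)=W_{F,p^{j+mn}s}(a)=W_{F,s}(a)$ by the case already settled. This is also the step where the implicit convention that ``$p^{j}s$'' denotes a residue modulo $q-1$ (which requires $\gcd(p,q-1)=1$, automatic since $q$ is a power of $p$) gets used.

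I do not anticipate a genuine obstacle: the heart of the argument is the one-line termwise identity above. The only points deserving a little care are (i) justifying $\mu(z^p)=\mu(z)$ for the possibly extension-field character used in the definition, and (ii) making the $j<0$ statement precise, namely that ``$p^{j}s$'' is only meaningful modulo $q-1$ and that the Weil sum is well defined on that residue class.
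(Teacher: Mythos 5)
Your argument is correct and is essentially the same as the paper's justification: the paper simply remarks that $x^{p^js}$ and $x^s$ are Galois conjugates and hence have the same absolute trace, which is precisely your Frobenius-invariance identity $\mu(z^p)=\mu(z)$ applied termwise. Your additional care with negative $j$ (interpreting $p^js$ modulo $q-1$) is a reasonable and correct elaboration of a point the paper leaves implicit.
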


The proof of this lemma relies on the fact that $x^{p^js}$ and $x^s$ are Galois conjugates and thus have the same trace.

The next questions of interest would be how many distinct values $W_{F,s}(a)$ takes as $a$ ranges over $F$, and what they are. We define the \emph{Weil spectrum} for some fixed $s$ to be the set $\{W_{F,s}(a)\mid a\in F^\times\}$, and say that it is \emph{r-valued} if $|\{W_{F,s}(a)\mid a\in F^\times\}|=r$. 

If $s$ is a power of $p$ modulo $(q-1)$, we call $s$ \emph{degenerate}. For \rd a \bk degenerate power $s$ we know that $W_{F,s}(a)$ takes two values via a theorem by Helleseth \cite{tor}.

\begin{theorem}[Helleseth \cite{tor}] \label{2values}
If $s$ is degenerate, $W_{F,s}(a)$ is two-valued over $F$ where
\[
W_{F,s}(a)=
\begin{cases}
q & \text{if $a=1$},\\
0 & \text{otherwise}
\end{cases}
\]
If $s$ is nondegenerate, then $W_{F,s}(a)$ takes at least three values over $F^\times$.
\end{theorem}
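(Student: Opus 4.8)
The plan is to establish the two halves of the statement by separate arguments: the degenerate case by a one‑line reduction plus a trivial character sum, and the nondegenerate case by a power‑moment computation.

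For the degenerate half, suppose $s\equiv p^{k}\pmod{q-1}$. Then $x^{s}=x^{p^{k}}$ for every $x\in F$, so $W_{F,s}(a)=W_{F,p^{k}}(a)$ term by term, and Lemma~\ref{powerlemma} (applied with base exponent $1$) gives $W_{F,p^{k}}(a)=W_{F,1}(a)=\sum_{x\in F}\mu\big((1-a)x\big)$. This last sum equals $q$ when $a=1$ and $0$ otherwise, which is the claimed two‑valued behaviour. (One can also see this directly via the substitution $x\mapsto x^{p^{n-k}}$ together with Frobenius‑invariance of the trace.)

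For the nondegenerate half I would argue by contradiction: assume $W:=W_{F,s}$ takes at most two values on $F^{\times}$, noting that $q>2$ since otherwise no exponent is nondegenerate. Two facts drive the argument. First, interchanging the order of summation and using orthogonality of additive characters yields the power moments
\[
\sum_{a\in F}W(a)=q,\qquad \sum_{a\in F}W(a)^{2}=q^{2},\qquad \sum_{a\in F}W(a)^{3}=q^{2}(N+1),
\]
where $N:=\#\{t\in F^{\times}:(1+t)^{s}=1+t^{s}\}$; the third identity requires a short count after homogenizing the triple sum by $y=tx$ and isolating the contributions with $x=0$ or $y=0$. Second, one has $1\le N\le q-2$: the lower bound because $t=-1$ always solves $(1+t)^{s}=1+t^{s}$ (as $(-1)^{s}=-1$, or $p=2$), and the upper bound because $N=q-1$ would force $(1+t)^{s}=1+t^{s}$ for all $t\in F$, hence $(u+v)^{s}=u^{s}+v^{s}$ for all $u,v\in F$ after homogenizing, i.e.\ $x\mapsto x^{s}$ additive; but an additive power map on $F$ is forced (by uniqueness of the linearized‑polynomial expansion, compared against the single monomial $x^{s\bmod(q-1)}$) to be $x\mapsto x^{p^{k}}$ for some $k$, contradicting nondegeneracy.

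To finish: if $W$ were constant on $F^{\times}$, the first two moments would force that constant to equal $q/(q-1)$, which is not an algebraic integer for $q>2$; so $W$ takes exactly two distinct values $u\neq v$, the roots of $t^{2}-\sigma t+\pi$ with $\sigma=u+v$, $\pi=uv$. Feeding $p_{0}=q-1$, $p_{1}=q$, $p_{2}=q^{2}$, $p_{3}=q^{2}(N+1)$ into the Newton recursion $p_{k+1}=\sigma p_{k}-\pi p_{k-1}$ (for $k=1,2$) and eliminating $\pi$ gives $\sigma(q-2)=N(q-1)-1$ and $\pi=q(\sigma-N-1)$. Since $\sigma=u+v$ is a sum of Weil‑sum values, hence an algebraic integer, and equals the rational number $\tfrac{N(q-1)-1}{q-2}$, it lies in $\mathbb{Z}$; so $(q-2)\mid(N-1)$, and together with $1\le N\le q-2$ this forces $N=1$, whence $\sigma=1$, $\pi=-q$, and $\{u,v\}=\{\tfrac12(1\pm\sqrt{1+4q})\}$. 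Finally $n_{u}+n_{v}=q-1$ and $n_{u}u+n_{v}v=q$ give $n_{u}-n_{v}=(q+1)/\sqrt{1+4q}$, a positive integer, so $1+4q$ is an odd perfect square $m^{2}$ and $q=\tfrac{m-1}{2}\cdot\tfrac{m+1}{2}$ is a prime power equal to a product of two consecutive positive integers --- impossible for $q>2$. The main obstacle, I expect, is identifying and correctly evaluating the third moment as $q^{2}(N+1)$ and establishing the sharp equivalence ``nondegenerate $\iff N\le q-2$'' via the additive‑monomial fact; once these are in hand the remaining deductions are forced, the only delicate points being the bookkeeping in the third‑moment count and the closing number‑theoretic observation about prime powers.
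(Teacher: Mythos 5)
The paper does not actually prove Theorem~\ref{2values}; it is quoted from Helleseth's paper \cite{tor} and used as a black box, so there is no in-text proof to compare against. Your reconstruction is correct and is essentially the classical power-moment argument (the same machinery the paper records as Lemma~\ref{powermoment}): your $N$ equals $|R|-1$ for the set $R=\{x\in F:(1-x)^s+x^s-1=0\}$ via the substitution $x=-t$, so your third-moment identity $\sum_a W(a)^3=q^2(N+1)$ agrees with Lemma~\ref{powermoment}(iii), and the rest of your deductions check out: the constant case is killed by $q/(q-1)\notin\mathbb{Z}$; the Newton recursion with $p_0=q-1$, $p_1=q$, $p_2=q^2$, $p_3=q^2(N+1)$ does give $\sigma(q-2)=N(q-1)-1$ and $\pi=q(\sigma-N-1)$; integrality of $\sigma$ plus $1\le N\le q-2$ forces $N=1$, $\sigma=1$, $\pi=-q$; and the multiplicity equations force $1+4q$ to be a perfect square, making $q>2$ a prime power equal to a product of two consecutive integers, which is impossible. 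The bound $N\le q-2$ via the linearized-polynomial uniqueness argument is exactly the right way to encode nondegeneracy. Two small points you should make explicit in a final write-up: (a) the identification $\sum_a|W(a)|^2=\sum_a W(a)^2$ (and similarly for the cube) uses that $W$ is real-valued, which holds because an invertible $s$ is odd when $p$ is odd (substitute $x\mapsto -x$) and trivially when $p=2$; and (b) the step $(-1)^s=-1$ in verifying $t=-1\in$ your solution set and in the homogenization uses the same parity fact. Neither is a gap, just bookkeeping to include.
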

So, when exactly is the Weil spectrum three-valued? In the same paper that Helleseth proposed the Vanishing Conjecture in $1971$, he also gave a criteria for when this three-valued property is never met \cite{Tormaster,tor}.
\begin{conjecture}[Helleseth Three-Valued Conjecture, 1971]
Let $F$ be a finite field of characteristic $p$. If $[F:\mathbb{F}_p]$ is a power of $2$, then for any invertible exponent $s$, the spectrum of the Weil sum $W_{F,s}(a)$ is not three-valued.
\end{conjecture}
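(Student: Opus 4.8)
The plan is to argue by contradiction. Suppose $F$ has $[F:\mathbb F_p]=n=2^t$ and $s$ is an invertible exponent whose Weil spectrum is exactly three-valued, say $\{w_1,w_2,w_3\}$. By Theorem~\ref{2values} a degenerate exponent already produces a two-valued spectrum, so we may assume $s$ is nondegenerate, and then the same theorem guarantees at least three values; hence what must be excluded is that there are \emph{exactly} three. By Lemma~\ref{powerlemma} we are free to replace $s$ by any $p$-power multiple, so we may normalize its base-$p$ digit pattern. It is also natural to split on whether $s\equiv 1\pmod{p-1}$: in that case Helleseth's rationality criterion makes every $w_i$ a rational integer and the integer power moments below apply verbatim, while in the remaining case one first analyzes the Galois action on $\{w_1,w_2,w_3\}\subset\mathbb Q(\zeta_p)$, which must be a union of orbits of the cyclic group $(\mathbb Z/p)^\times$; for instance, three mutually conjugate values force $3\mid p-1$, and in general one value is forced to be rational.

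The core tool is the sequence of power moments $V_k:=\sum_{a\in F}W_{F,s}(a)^k$. Expanding and using orthogonality of additive characters gives $V_0=q$, $V_1=q$ and $V_2=q^2$ \emph{independently of $s$}, while $V_3=q\sum_{x+y+z=0}\mu(x^s+y^s+z^s)$ and $V_4=q\sum_{x+y+z+w=0}\mu(x^s+y^s+z^s+w^s)$ are genuinely $s$-dependent: they count $\mathbb F_q$-points of affine varieties cut out by $\sum_i u_i=0$ and $\sum_i u_i^s=v$. If $\{w_1,w_2,w_3\}$ occur over $F^\times$ with multiplicities $m_1,m_2,m_3$, then $\sum_i m_i=q-1$, and (using $W_{F,s}(0)=0$) $\sum_i m_i w_i=q$, $\sum_i m_i w_i^2=q^2$, $\sum_i m_i w_i^3=V_3$, with a fourth relation from $V_4$. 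Eliminating the $m_i$ turns this into a single constraint among the three values $w_i$ and the point counts $V_3,V_4$, and the goal is to show it has no solution with $m_i\in\mathbb Z_{\ge 0}$ unless the spectrum degenerates.

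The power-of-two hypothesis should enter through $p$-adic divisibility. Writing $W_{F,s}(a)$ via Gauss sums indexed by multiplicative characters and applying Stickelberger's congruence (the approach of Katz and Langevin) yields a lower bound for $\nu_p\bigl(W_{F,s}(a)\bigr)$ in terms of the base-$p$ digit sums of $ks\bmod(q-1)$ for small $k$; when $n=2^t$ these bounds are large and uniform enough to pin down each nonzero $w_i$ modulo a sizeable power $p^\ell$. Since $V_3$ and $V_4$ are themselves computable modulo powers of $p$ (again via digit-sum estimates), feeding this congruence data into the moment system over-determines $m_1,m_2,m_3$ and collides with their integrality and non-negativity, producing the contradiction. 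That all known three-valued decimations (Gold, Kasami, Welch, Niho, Dobbertin) require $n$ to have a nontrivial odd divisor is exactly what this mechanism predicts.

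The main obstacle is controlling $V_3$ and $V_4$ for an arbitrary exponent: these are point counts on varieties whose geometry — number of components and their genera — depends on the arithmetic of $s$ modulo $q-1$ in a way that is not uniformly understood, which is precisely why the conjecture is still open in full. The realistic plan is therefore to establish it first for Niho exponents: a field $F$ of $2$-power degree is a quadratic extension of some subfield $K$, and for a Niho exponent $s\equiv p^j\pmod{|K^\times|}$; the Niho structure then lets one rewrite $W_{F,s}(a)$ as a sum over the fibres of $\Tr_{F/K}$ (equivalently, over the norm-one subgroup), collapsing the higher moments to Kloosterman sums and Weil sums over $K$, whose moments are classical. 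Combining that reduction with the divisibility bounds above should close the Niho case; removing the Niho hypothesis is the part I expect to be genuinely hard and to need new input on the moment varieties.
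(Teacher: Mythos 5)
The statement you are trying to prove is the Helleseth Three-Valued Conjecture, which the paper states precisely as a \emph{conjecture}: it offers no proof, and indeed records that the problem is open except for the characteristics $p=2$ and $p=3$ (Katz) and for special families (Welch, Niho, and the ten known three-valued families). So there is no ``paper proof'' to compare against, and your proposal should be judged as a research program rather than an argument. Read that way, it is a reasonable survey of the known strategy (power moments plus $p$-adic divisibility via Stickelberger, as in Katz and Katz--Langevin), but it does not constitute a proof, and the gaps are exactly at the points where the conjecture is hard.

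Concretely: (1) Your central claim --- that when $[F:\mathbb{F}_p]=2^t$ the Stickelberger digit-sum bounds are ``large and uniform enough to pin down each nonzero $w_i$ modulo a sizeable power $p^\ell$'' --- is asserted, not established, and it is precisely the step that fails for general $p$; Katz's proofs for $p=2,3$ require delicate, characteristic-specific divisibility arguments that are not known to extend. (2) The moment system you set up is underdetermined in the wrong direction: with three values the first three moments already determine $m_1,m_2,m_3$ by a Vandermonde computation, and the obstruction must come from $V_3$ (the paper's third moment, which equals $q\,|R|$ with $R=\{x:(1-x)^s+x^s-1=0\}$) and $V_4$; but you concede yourself that these point counts are not controlled for arbitrary $s$. (3) Even your fallback Niho case is not closed by what you write: a field of $2$-power degree is a quadratic extension of a subfield, and the paper does prove four- and five-valuedness for Niho exponents, but only under arithmetic hypotheses on $d_1=\gcd(k,p^n+1)$ and $d_2=\gcd(k-1,p^n+1)$ (namely $d_1+d_2\geq 5$, or $d_1+d_2=3$ with $p^n\equiv 11\pmod{12}$), not unconditionally. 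In short, the proposal correctly identifies the standard toolbox but supplies no new mechanism at the one place a new mechanism is needed, so it cannot be accepted as a proof of the conjecture.
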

More progress has been made towards this conjecture in comparison to the Vanishing Conjecture, using various approaches from coding theory, cryptography and number theory \cite{Calderbank3value, WelchConjecture, WelchNihoConjecture, mcguire,charpin,feng,katz,cyclotomy,divisibility,katz_langevin}. Currently, only ten families of three-valued Weil \rd spectra \bk are known \cite[Table 1]{cyclotomy}, and these are conjectured to be the only ones that occur. \bl The cases for characteristic $p=2$ and $p=3$ in the Three-Valued Conjecture were proven by Daniel Katz in \cite{katz} and in \cite{divisibility}, respectively \bk. Special families of the three-valued Weil sum for all characteristics $p$ are also addressed via the Welch Conjecture and the Niho Conjecture. Canteaut, Charpin, and Dobbertin gave a proof to the Welch Conjecture in \cite{WelchConjecture} and Hollmann and Xiang proved both the Welch and Niho Conjectures in \cite{WelchNihoConjecture}. More recently, a family of three-valued Weil \bl spectra \bk has been proved \cite{threevaluefamily}. 

The organization of this paper is as follows. The first part discusses how a Weil sum can be viewed as an inner product of characters over a finite field. This observation leads to a relation which is part of a method called power moments. In general, the method of power moments studies the summation $\displaystyle\sum W^k_{F,s}(a)$ for any positive integer $k$. The summation can be taken over a finite field, \bll and we look at a case where the summation is taken over different orbits in the field via multiplication action by a subfield. \bk This method is useful in the studies of distribution/averaging behavior and the divisibility of \rd the \bk Weil sum \cite{tor,divisibility}. 

Using the discussion from the first part we give an alternative proof of the Vanishing Conjecture for the case of a Niho exponent $s$ to \cite{helleseth_lahtonen_rosendahl}. The discussion then continues with obtaining bounds for the Weil sum in this setting.

\begin{theorem}\label{vanishingp^2n}
Let $L$  be a finite field where $q=p^{2n}$ for some odd prime $p$ and positive integer $n$. Suppose that $s$ is an invertible Niho exponent \rd over $L$. \bk Then $s$ is singular.
\end{theorem}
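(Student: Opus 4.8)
The plan is to convert the $2n$-dimensional character sum into a point count on the norm-one subgroup of $L^\times$ and then extract a zero of $W_{L,s}$ by a double-counting argument, close in spirit to the theory of linear spaces and the de Bruijn--Erd\H{o}s theorem.

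Put $q_0=p^n$, so $q=q_0^2$, let $\mathbb{F}_{q_0}\subset L$ be the subfield of order $q_0$, write $\bar x:=x^{q_0}$ for the Frobenius of $L$ over $\mathbb{F}_{q_0}$, and let $U=\{u\in L^\times:\bar u=u^{-1}\}$ be the kernel of the norm, so $|U|=q_0+1$. By Lemma~\ref{powerlemma} I may assume $s$ is a normalized Niho exponent, so $s\equiv 1\pmod{q_0-1}$; write $s=1+\ell(q_0-1)$ with $\ell$ a positive integer. For $x\in L^\times$ the element $u:=x^{q_0-1}$ lies in $U$, and $x\mapsto x^{q_0-1}$ is a surjection $L^\times\to U$ whose fibers are the cosets of $\mathbb{F}_{q_0}^\times$. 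Since $x^s-ax=x\big((x^{q_0-1})^\ell-a\big)$ equals a constant times $\mathbb{F}_{q_0}^\times$ on each fiber $\beta_u\mathbb{F}_{q_0}^\times$, separating off $x=0$ and summing over a fiber gives, by orthogonality of the additive characters of $\mathbb{F}_{q_0}$,
\[
\sum_{c\in \mathbb{F}_{q_0}^\times}\mu\big(c\,\beta_u(u^\ell-a)\big)=\begin{cases} q_0-1,& \Tr_{L/\mathbb{F}_{q_0}}\!\big(\beta_u(u^\ell-a)\big)=0,\\ -1,&\text{otherwise.}\end{cases}
\]
Using $\overline{\beta_u}=u\beta_u$ and $\bar u=u^{-1}$, the trace condition simplifies to $u^\ell+u^{1-\ell}=a+\bar a u$, which no longer involves $\beta_u$, and summing over $u\in U$ yields the key identity
\[
W_{L,s}(a)=q_0\big(N(a)-1\big),\qquad N(a):=\#\{\,u\in U: u^\ell+u^{1-\ell}=a+\bar a u\,\},
\]
so that $W_{L,s}(a)=0$ if and only if $N(a)=1$ (and, incidentally, $W_{L,s}(a)\in q_0\mathbb{Z}$ always).

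I then establish two \emph{exact} counting identities. Fix $u\in U$. The map $a\mapsto a+\bar a u$ is $\mathbb{F}_{q_0}$-linear $L\to L$; a short computation (here $-1\in U$, i.e.\ $q_0$ odd, is used) shows its kernel has size $q_0$, so its image is an $\mathbb{F}_{q_0}$-line, namely $\{\,y:\bar y=u^{-1}y\,\}$, and this line contains $u^\ell+u^{1-\ell}$ because $\overline{u^\ell+u^{1-\ell}}=u^{-1}(u^\ell+u^{1-\ell})$. Hence each $u\in U$ is realized by exactly $q_0$ values of $a$, giving $\sum_{a\in L}N(a)=q_0(q_0+1)$. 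For the second identity, fix distinct $u_1,u_2\in U$: subtracting the two equations $a+\bar a u_i=u_i^\ell+u_i^{1-\ell}$ forces $\bar a$, hence $a$, and this candidate $a$ does satisfy both, because the defect $e:=(a+\bar a u_1)-(u_1^\ell+u_1^{1-\ell})=(a+\bar a u_2)-(u_2^\ell+u_2^{1-\ell})$ satisfies $\bar e=u_1^{-1}e=u_2^{-1}e$ with $u_1\ne u_2$, so $e=0$. Thus every ordered pair of distinct elements of $U$ is realized by a unique $a$, i.e.\ $\sum_{a\in L}N(a)\big(N(a)-1\big)=q_0(q_0+1)$. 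Subtracting the two identities gives $\sum_{a\in L}N(a)\big(N(a)-2\big)=0$.

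To finish, $W_{L,s}(0)=0$ forces $N(0)=1$, so the last identity becomes $\sum_{a\in L^\times}N(a)(N(a)-2)=1$. Suppose, for contradiction, that $W_{L,s}(a)\ne 0$ for every $a\in L^\times$, i.e.\ $N(a)\ne 1$ for all $a\in L^\times$. Then each term $N(a)(N(a)-2)$ with $a\ne 0$ is a non-negative integer — it is $0$ when $N(a)\in\{0,2\}$ and at least $3$ when $N(a)\ge 3$ — so the sum cannot equal $1$. This contradiction produces an $a\in L^\times$ with $N(a)=1$, hence $W_{L,s}(a)=0$, so $s$ is singular.

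The main work, and the step most prone to slips, is the pair of exact identities: one must compute the kernel and image of $a\mapsto a+\bar a u$ over $\mathbb{F}_{q_0}$ correctly and check that the forced value of $a$ genuinely solves \emph{both} equations for every $u_1\ne u_2$, with no error term — the concluding parity argument has no slack at all. This linear algebra over the quadratic extension $L/\mathbb{F}_{q_0}$, together with the elementary facts about $U$ and about the fibers of $x\mapsto x^{q_0-1}$ (which is where $p$ being odd enters), is precisely what must be handled differently in the already-known characteristic-$2$ case.
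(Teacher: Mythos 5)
Your proof is correct, and its decisive step is the same as the paper's: you show $W_{L,s}(a)=p^n\bigl(N(a)-1\bigr)$ (your $N(a)$ is exactly $|K_{a,s}|/(p^n-1)$ from \cref{divisibilityp^nlemma}), and your final identity $\sum_{a\in L^\times}N(a)\bigl(N(a)-2\bigr)=1$ is, after substituting $h_a=N(a)-1$, precisely the paper's $\sum_{a\in L^\times}h_a^2=q$ from \eqref{secondpowermoment}; the concluding dichotomy (each term is $0$ when $|h_a|=1$ and at least $3$ when $|h_a|\geq 2$, so the sum cannot hit the required value if no $h_a$ vanishes) is the same argument. The one genuine difference is how the moment identities are obtained: the paper cites the general first and second power moments (\cref{powermoment}, proved via orthonormality of additive characters), whereas you re-derive them in the Niho setting by a self-contained double count on the norm-one subgroup $U$ --- each $u\in U$ is hit by exactly $q_0$ values of $a$, and each ordered pair $u_1\neq u_2$ by exactly one, via the eigenspace computation for $a\mapsto a+\bar a u$. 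That computation is correct (the kernel is the $q_0$-element eigenspace for $-u^{-1}\in U$, using $q_0$ odd, and the consistency check $\bar e=u_1^{-1}e=u_2^{-1}e\Rightarrow e=0$ is exactly the point with no slack), and it buys you a proof that avoids invoking the Fourier-analytic power-moment machinery, at the cost of being special to Niho exponents rather than working for all invertible $s$.
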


The last part of the paper focuses on computing the Weil sum for special values $a$ in the finite field and the Weil spectrum for the case of Niho \rd exponents. \bk Based on numerical evidence, we propose a conjecture for the five-valued behavior.

\bl
\begin{conjecture}\label{5valuesconjecture}
Let $L$ be a quadratic extension of a finite field $F$ of order $p^n$, where $p$ is an odd prime. Let $s=1+k(p^n-1)$ be an invertible Niho exponent over $L$, $d_1=\gcd(k, p^n+1)$, and $d_2=\gcd(k-1,p^n+1)$. 
If either
\begin{itemize}
    \item[(i)] $d_1+d_2 \geq 5$, or
    \item[(ii)] $d_1+d_2=3$ and $p^n \equiv 11 \pmod{12}$,
\end{itemize}
satisfies, then the Weil spectrum over $L$ is at least five-valued.

Moreover, in case (i), the five values are $\{0, -p^n, p^n, 2\alpha p^n, (2\beta+1)p^n\}$ where $\alpha, \beta \geq 1$ are integers. In case (ii), at least four values are $\{0,-p^n,p^n,2p^n\}$.
\end{conjecture}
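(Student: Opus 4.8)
The plan is to convert the Weil sum into a point count on the unit circle. Write $q=p^{n}$ and let $S=\{t\in L:t^{q+1}=1\}$; using $x^{s}=x\,(x^{q-1})^{k}$ together with the decomposition $L^{\times}=\bigsqcup_{t\in S}x_{0}(t)F^{\times}$, where $x_{0}(t)^{q-1}=t$, one writes $x=c\,x_{0}(t)$ with $c\in F^{\times}$, so that $x^{s}-ax=c\,x_{0}(t)(t^{k}-a)$, and summing out $c\in F^{\times}$ yields
\[
W_{L,s}(a)=q\bigl(N(a)-1\bigr),\qquad N(a)=\#\{t\in S:\ t^{k}+t^{1-k}=a+a^{q}t\}.
\]
Since $N(a)\in\mathbb{Z}_{\ge 0}$, the Weil spectrum lies in $\{-q\}\cup q\mathbb{Z}_{\ge 0}$, and it suffices to prove that $N$ attains at least five values (and to identify which ones).

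Next I would read off the \emph{visible} values. For $a=1$ the defining equation rearranges to $(t^{k}-1)(t^{k-1}-1)=0$, so $N(1)=d_{1}+d_{2}-1$ by inclusion--exclusion, the two solution sets meeting only at $t=1$. Because $q+1$ is even and exactly one of $k,k-1$ is even, exactly one of $d_{1},d_{2}$ is even, so $d_{1}+d_{2}$ is \emph{odd}; hence in case (i), $N(1)\ge 4$ is even and $W_{L,s}(1)=q(d_{1}+d_{2}-2)=(2\beta+1)p^{n}$ with $\beta\ge 1$, while in case (ii), $N(1)=2$ and $W_{L,s}(1)=p^{n}$. For $a\in F^{\times}$ one checks that $t=-1$ always solves the equation and that the remaining solutions are precisely $g^{-1}(a)$, where $g(t)=(t^{k}+t^{1-k})/(1+t)$ lies in $F$ and satisfies $g(t)=g(1/t)$; thus $|g^{-1}(a)|$ is even for $a\ne 1$, so $N(a)$ is odd there. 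Choosing $t_{0}\in S$ with $t_{0}^{k}\ne 1\ne t_{0}^{k-1}$ (possible since $d_{1}+d_{2}\le q+1$ for Niho exponents) and setting $a=g(t_{0})\in F^{\times}\setminus\{1\}$ then gives $N(a)$ odd and $\ge 3$, i.e.\ a value $2\alpha p^{n}$ with $\alpha\ge 1$. The value $0$ (that is, $N(a)=1$) is furnished directly by Theorem~\ref{vanishingp^2n}, or, for $q$ large, by the observation that the image of $g$ has size less than $q$ and so misses some $a\in F^{\times}$.

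Because $N$ is odd on $F^{\times}\setminus\{1\}$ and $N(1)\ne 2$ in case (i), the two remaining values $-q$ ($N(a)=0$) and $p^{n}$ ($N(a)=2$) can be realized only by $a\notin F$. Here I would use two exact identities. Summing the point count over $t\in S$, and noting that for $a\in F$ the condition $a(1+t)=t^{k}+t^{1-k}$ has the single solution $a=g(t)\in F$ when $t\ne-1$ and all $q$ solutions $a\in F$ when $t=-1$, one obtains $\sum_{a\in L\setminus F}N(a)=q(q-1)=|L\setminus F|$, so $N$ has average exactly $1$ on $L\setminus F$. The standard second power moment $\sum_{a\in L}W_{L,s}(a)^{2}=q^{4}$ becomes $\sum_{a\in L\setminus F}(N(a)-1)^{2}=q^{2}-Q$ with $Q=\sum_{c\in F}|g^{-1}(c)|^{2}<q^{2}$ (as $g$ is not constant, which itself follows from $d_{1}+d_{2}\le q+1$). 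Hence $N$ is not identically $1$ off $F$, and vanishing of the off-$F$ mean of $N-1$ forces both a negative term $N(a)=0$ and a positive term $N(a)\ge 2$; in particular $-q$ lies in the spectrum.

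The main obstacle is to upgrade ``$N(a)\ge 2$ for some $a\notin F$'' to ``$N(a)=2$ for some $a\notin F$'' — two power moments alone do not pin down the value $2$ — and, likewise, to supply case (ii)'s value $2p^{n}$. For this I would pass to the explicit model for $a\notin F$ via the Cayley transform $t=(1+w)/(1-w)$, with $w$ ranging over the trace-zero line of $L/F$: it reduces $N(a)$ to the number of $F$-rational roots of a single polynomial $D_{a}(u)=u\,C_{a}(u)^{2}-B_{a}(u)^{2}$ of degree $2k-1$ whose coefficients are explicit in the trace and norm of $a$ over $F$. One then wants to choose $a$ so that $D_{a}$ splits off a quadratic with two roots in $F$ and a complementary factor with none, giving $N(a)=2$; absent a uniform such construction for all admissible $k$, a third power moment — equivalently, control of $\sum_{x+y+z=0}\mu(x^{s}+y^{s}+z^{s})$ through the same Niho decomposition — should exclude the residual possibility that $N$ avoids the value $2$ entirely. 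For case (ii), the hypothesis $p^{n}\equiv 11\pmod{12}$ is used precisely to guarantee at once that $S$ contains a primitive cube root of unity (since $3\mid q+1$, i.e.\ $q\equiv 2\pmod 3$) and that $-1$ is a non-square in $F$ (since $q\equiv 3\pmod 4$); exhibiting an $a\notin F$ for which a primitive cube root of unity and its conjugate are the only solutions of $t^{k}+t^{1-k}=a+a^{q}t$ then gives $N(a)=3$ and the value $2p^{n}$. I expect the genuinely delicate point to be exactly this construction of special $a$ for arbitrary admissible $k$, as opposed to the small or structured $k$ for which $g$ already has fibres of size $2$ and case (ii) is immediate.
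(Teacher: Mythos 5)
First, note that the statement you are addressing is stated in the paper as a \emph{conjecture}: the paper itself does not prove it in full, but only (a) the four-value statement of \cref{4valued}, and (b) five-valuedness under extra hypotheses (\cref{theoremM_i}, which requires $k<\frac{p}{2}+1$ and $n\ge 2$, and \cref{5valuesconjecturep^n3mod4} for case (ii)). Measured against that, your proposal correctly re-derives essentially all of \cref{4valued} by a route that is equivalent to, but more explicit than, the paper's: your identity $W_{L,s}(a)=p^n(N(a)-1)$ with $N(a)=\#\{t\in L: t^{p^n+1}=1,\ t^k+t^{1-k}=a+a^{p^n}t\}$ is \cref{divisibilityp^nlemma} after writing $|K_{a,s}|=(p^n-1)N(a)$, and the factorization $(t^k-1)(t^{k-1}-1)$ at $a=1$ recovers \cref{W1}. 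Your construction of the even value $2\alpha p^n$ by exhibiting $a=g(t_0)$ with a fibre of size at least $3$ is a nice constructive alternative to the paper's argument via \cref{firstmoment} and \cref{cyclotomykatz}, and your parity analysis of $g(t)=g(1/t)$ reproves the relevant case of \cref{cyclotomykatz}. Two small points to tighten: you should verify $g(t_0)\ne 0$, which holds because $\gcd(2k-1,p^n+1)=\gcd(s,p^n+1)=1$ forces $t=-1$ to be the only root of $t^{2k-1}=-1$ on the unit circle; and in case (ii) the value $2p^n$ needs no special construction --- it is simply $W_{L,s}(-1)$, i.e.\ $N(-1)=3$ from the factorization $(t^k+1)(t^{k-1}+1)$, which is the paper's \cref{W-1}.

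The genuine gap is exactly the one you flag yourself: producing the fifth value. The first and second power moments cannot distinguish ``some $a\notin F$ has $N(a)\ge 2$'' from ``$N(a)=2$ is attained,'' and neither the Cayley-transform polynomial $D_a$ nor the third moment is actually carried out in your sketch. This is not a minor omission: the paper closes the gap only partially, and the mechanism it uses is precisely the third power moment made effective --- \cref{powermoment}(iii) combined with the exact count $|R|=p^n+(d_1-1)(d_1-2)+(d_2-1)(d_2-2)$ of \cref{xx^slemma}, which turns the hypothesis ``only four values'' into a $4\times 4$ linear system for the multiplicities $m_1,\dots,m_4$ whose unique rational solution violates positivity or integrality. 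Without an analogue of \cref{xx^slemma} (an exact evaluation of $\sum_a W_{L,s}(a)^3$), your appeal to the third moment has no content; and even with it, case (i) is resolved in the paper only for $k<\frac{p}{2}+1$. Your proposal should therefore be read as a correct and somewhat different proof of \cref{4valued}, not of \cref{5valuesconjecture}.
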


\bk

A special case of the condition $d_1+d_2 \geq 5$ in \cref{5valuesconjecture} is $p^n \equiv 2 \pmod{3}$. Hence, we can restate the conjecture with simpler assumptions as follows.

\begin{conjecture}\label{5valuesconjecturep^n2mod3}
Let \bl p be an odd prime, \bk $L$ be a quadratic extension of a finite field $F$ of order $p^n$, and $s=1+k(p^n-1)$ be an invertible Niho exponent over $L$. If $p^n \equiv 2 \pmod{3}$, then the Weil spectrum has at least five values of the form $\{0, -p^n, p^n, 2\alpha p^n, (2\beta+1)p^n\}$ for integers $\alpha,\beta \geq 1$.
\end{conjecture}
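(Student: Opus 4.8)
The plan is to convert the Weil sum into a point count on the ``unit circle'' $U=\{u\in L:u^{q+1}=1\}$, which has size $q+1$, where $q=p^n$. By \cref{powerlemma} I may assume $s=1+k(q-1)$ is a normalized Niho exponent; invertibility of $s$ then amounts to $\gcd(2k-1,q+1)=1$, non-degeneracy (that $s$ is genuinely Niho) to $d_1=\gcd(k,q+1)<q+1$ and $d_2=\gcd(k-1,q+1)<q+1$, and since $\gcd(d_1,d_2)\mid\gcd(k,k-1)=1$ one gets $d_1d_2\mid q+1$, hence $d_1+d_2\le q+1$. Writing $x^{q-1}=u$ and using $x^s=x(x^{q-1})^k=xu^k$, I would sum over the fibres $xF^\times$ of $x\mapsto x^{q-1}$ and collapse the inner sum by orthogonality of the additive characters of $F$; since $x^q=ux$ on such a fibre, the inner sum vanishes unless $u^k+u^{1-k}=a+a^qu$, giving
\[
W_{L,s}(a)=q\bigl(N(a)-1\bigr),\qquad N(a):=\#\{\,u\in U:\ u^k+u^{1-k}=a+a^qu\,\}.
\]
Thus the spectrum lies in $q\mathbb{Z}$, so the claimed shape $\{0,-p^n,p^n,2\alpha p^n,(2\beta+1)p^n\}$ is exactly the statement that $N$ attains $1$, $0$, $2$, an odd value $\ge 3$, and an even value $\ge 4$; these five give distinct multiples of $q$ (even versus odd).

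First I would collect four of the five values. Taking $a=1$ and multiplying the relation by $u^{k-1}$ gives $(u^{k-1}-1)(u^k-1)=0$, so $N(1)=d_1+d_2-1$; as $q+1$ is even and exactly one of $k,k-1$ is even, exactly one of $d_1,d_2$ is even, whence $N(1)$ is even and $W_{L,s}(1)=q(d_1+d_2-2)$ is an \emph{odd} multiple of $q$, equal to $(2\beta+1)p^n$ with $\beta\ge1$ once $d_1+d_2\ge5$. (To deduce \cref{5valuesconjecturep^n2mod3} from \cref{5valuesconjecture}(i): if $p^n\equiv2\pmod3$ then $3\mid q+1$, and $\gcd(2k-1,q+1)=1$ forces $k\not\equiv2\pmod3$, so $3$ divides $k$ or $k-1$, hence $d_1$ or $d_2$, and together with the parity remark this already gives $d_1+d_2\ge5$.) Next, restricting to $a\in F^\times$: since $x^{q-1}=-1$ forces $x^s=(-1)^kx$ with $\Tr_{L/F}(x)=0$, one computes $\sum_{a\in F}W_{L,s}(a)=q\sum_{\Tr_{L/F}(x)=0}\mu(x^s)=q^2$, so $\sum_{a\in F^\times}N(a)=2q-1$; for $a\in F^\times$ the map $u\mapsto u^{-1}$ is an involution of the solution set fixing only $\pm1$, and $u=-1$ always solves while $u=1$ solves only at $a=1$, so $N(a)$ is odd for $a\in F^\times\setminus\{1\}$. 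Subtracting $N(1)$, the remaining $q-2$ values are odd and $\ge1$ with sum $2q-d_1-d_2>q-2$, forcing some $N(a^\ast)\ge3$ and hence $W_{L,s}(a^\ast)=2\alpha p^n$, $\alpha\ge1$. The value $0$ is \cref{vanishingp^2n}. For $-p^n$, note $\sum_{a\in L}W_{L,s}(a)=q^2=\sum_{a\in F}W_{L,s}(a)$ gives $\sum_{a\in L\setminus F}N(a)=q^2-q=|L\setminus F|$; if $W_{L,s}\equiv0$ on $L\setminus F$ then $\sum_{a\in L}W_{L,s}(a)^2=q^4$ forces $\sum_{a\in F^\times\setminus\{1\}}N(a)^2=q^2+3q-1-(d_1+d_2-1)^2$, which exceeds the maximum $(q+3-d_1-d_2)^2+(q-3)$ of a sum of $q-2$ odd positive squares with total $2q-d_1-d_2$ — the difference being $-2(d_1+d_2-2)(d_1+d_2-q-2)>0$ since $3\le d_1+d_2\le q+1$ — a contradiction. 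Hence $W_{L,s}$ is not identically $0$ on $L\setminus F$; since the $N$-values there are non-negative integers averaging exactly $1$ and not all equal to $1$, one of them must be $0$, so $W_{L,s}$ attains $-p^n$.

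The hard part will be the remaining value $p^n$, i.e. producing $a\in L\setminus F$ with $N(a)=2$ (by the parity analysis it cannot come from $F$). The moment identities used above are consistent with $N$ never taking the value $2$ on $L\setminus F$ (for instance with the values there lying in $\{0,1,3\}$ only), so something finer is needed. I would attack it by an explicit construction using $p^n\equiv2\pmod3$: then $U$ contains a cyclic group $\langle\omega\rangle$ of order $3$ on which $u^k+u^{1-k}=1+u$, and more generally one can take $a$ in a suitable coset of $\ker(u\mapsto u^{d})$ for a divisor $d\mid q+1$ so as to control the roots of $u^{2k-1}-a^qu^k-au^{k-1}+1$ on the unit circle; the congruence hypothesis should guarantee solvability of the auxiliary equations $u^{d_1}=\zeta$, $u^{d_2}=\zeta$ that pin these roots down, and I expect the bulk of the work to be showing that such an $a$ can be chosen with \emph{exactly} two roots. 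I anticipate this step is what forces the restriction to a ``certain class'' of $k$ rather than to all invertible Niho exponents, since a uniform construction is not available and the character-sum estimates alone do not isolate the value $p^n$.
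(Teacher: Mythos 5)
You should first note that the statement you were asked to prove is stated in the paper as a \emph{conjecture}: the paper itself offers no proof of \cref{5valuesconjecturep^n2mod3}, only the reduction in \cref{remarkp^n2mod3} (that $p^n\equiv 2\pmod 3$ forces $d_1+d_2\ge 5$, which you rederive correctly from $\gcd(2k-1,p^n+1)=1$), the four-value result \cref{4valued}(i), and the partial result \cref{theoremM_i}, which produces a fifth value only under the extra hypotheses $k<\frac{p}{2}+1$ and $n\ge 2$, and even then does not identify that value as $p^n$. Measured against that, your proposal lands almost exactly where the paper does. The genuine gap is the one you yourself flag: you never produce an $a\in L\setminus F$ with $N(a)=2$, i.e.\ $W_{L,s}(a)=p^n$, and your own parity analysis shows this value cannot come from $a\in F$. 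The sketched construction via cosets of $\ker(u\mapsto u^d)$ is only a plan, not an argument, and the moment identities you use are, as you observe, consistent with $N$ omitting the value $2$ on $L\setminus F$. So the proposal does not prove the conjecture; it also does not recover the weaker \cref{theoremM_i}, whose mechanism is different --- the paper feeds the third power moment and the count $|R|=p^n+(d_1-1)(d_1-2)+(d_2-1)(d_2-2)$ from \cref{xx^slemma} into a $4\times 4$ linear system in the multiplicities and derives an integrality contradiction; you do not use the third moment at all.

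The part you do complete --- the four values $0,-p^n,2\alpha p^n,(2\beta+1)p^n$ --- is correct and is a genuinely different derivation of \cref{4valued}(i). Your unit-circle count $N(a)$ with $W_{L,s}(a)=p^n(N(a)-1)$ is equivalent to \cref{divisibilityp^nlemma} (with $N(a)=|K_{a,s}|/(p^n-1)$), and $N(1)=d_1+d_2-1$ reproves \cref{W1} at $t=1$. Where the paper invokes Katz's cyclotomic congruence (\cref{cyclotomykatz}) to get parity on $F\setminus\{1\}$ and then the first moment over $F$ to find a nonzero even value, you get the same parity from the involution $u\mapsto u^{-1}$ of the solution set (fixing only $u=-1$ for $a\in F^\times\setminus\{1\}$) and then find $N(a^\ast)\ge 3$ by pure counting; and where the paper gets $-p^n$ from a positivity argument with the first two moments, you get it from a second-moment extremal bound. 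Both routes are valid, and yours is more self-contained. One incidental point in your favour: your value $\sum_{a\in F}W_{L,s}(a)=p^{2n}$ is the correct one (it is forced by \cref{powermoment}(i) together with the vanishing of the off-$F$ coset sums), whereas \cref{firstmoment} as printed states $p^{2n}-1$; the contradiction in the proof of \cref{4valued}(i) survives, but via $W_{L,s}(1)=(d_1+d_2-2)p^n<p^{2n}$ rather than via $p^n\nmid p^{2n}-1$.
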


\begin{remark}\label{remarkp^n2mod3} 
Since s is an invertible exponent over $L$, $\gcd(s,p^{2n}-1)=1$. Hence, if $p^n \equiv 2 \pmod{3}$, then $s \equiv 1$ or $2 \pmod{3}$. Thus, $k \equiv 0 \pmod{3}$ and $(k-1) \equiv 2 \pmod{3}$, or $k \equiv 1 \pmod{3}$ and $(k-1) \equiv 0 \pmod{3}$. Moreover, $p^n+1$ is divisible by $2$ and $3$. Therefore either $d_1$ or $d_2$ in \rd \cref{5valuesconjecture} \bk is divisible by $3$. The same conclusion can be made for the divisibility of either $d_1$ or $d_2$ by $2$. Hence, $d_1+d_2 \geq 5$.
\end{remark}

\bl 

We end the paper by showing case (i) of \cref{5valuesconjecture} holds true for sufficiently large primes. Finally, we then proved case (ii).

\begin{theorem}\label{theoremM_i}
Let $L$ be a quadratic extension of a finite field $F$ of order $p^n$, where $p$ is an odd prime and $n \geq 2$ is an integer. Let $k \geq 2$ be an integer such that $k <\displaystyle\frac{p}{2}+1$, and $s=1+k(p^n-1)$ \rd be \bk an invertible Niho exponent over $L$.
Let $d_1=\gcd(k, p^n+1)$, and $d_2=\gcd(k-1,p^n+1)$. If $d_1+d_2 \geq 5$, then the Weil spectrum over $L$ is at least five-valued.
Moreover, four of those five values are $\{0, -p^n, 2\alpha p^n, (2\beta+1)p^n\}$ where $\alpha, \beta \geq 1$. 
\end{theorem}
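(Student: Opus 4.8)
I will first set up the standard Niho reduction. Let $U=\{u\in L^{\times}:u^{p^{n}+1}=1\}$, the norm-one subgroup of order $p^{n}+1$, and write $\bar x=x^{p^{n}}$ for conjugation of $L$ over $F$ (so $\bar u=u^{-1}$ for $u\in U$). Every $x\in L^{\times}$ is uniquely $x=t\,v_{u}$ with $t\in F^{\times}$, $u=x^{p^{n}-1}\in U$, and $v_{u}$ a fixed representative with $v_u^{p^n-1}=u$; since $s=1+k(p^{n}-1)$ gives $x^{s}=x\,u^{k}$, summing $\mu$ over $t\in F^{\times}$ (the inner sum is $p^{n}-1$ or $-1$ according as $\Tr_{L/F}\bigl(v_{u}(u^{k}-a)\bigr)=0$ or not, and $\Tr_{L/F}\bigl(v_u(u^k-a)\bigr)=v_u\bigl(u^k+u^{1-k}-a-\bar a u\bigr)$) yields, for all $a\in L^{\times}$,
\[
W_{L,s}(a)=\bigl(\widetilde N(a)-1\bigr)p^{n},\qquad \widetilde N(a)=\#\{u\in U:u^{k}+u^{1-k}=a+\bar a\,u\},
\]
and for $a\in F^{\times}$ this is the count $N(a)=\#\{u\in U:u^{k}+u^{1-k}=a(1+u)\}$ entering \cref{W1}. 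So the Weil spectrum lies in $p^{n}\mathbb{Z}_{\ge -1}$; I must realize the values $0$, $-p^{n}$, a positive even multiple $2\alpha p^{n}$ and a positive odd multiple $(2\beta+1)p^{n}$ (these four being automatically distinct), and one further value. Put $U_{1}=\{u\in U:u^{k}=1\}$, $U_{2}=\{u\in U:u^{k-1}=1\}$ (so $|U_{i}|=d_{i}$, $U_{1}\cap U_{2}=\{1\}$) and $b(u)=(u^{k}+u^{1-k})/(1+u)$ on $U\setminus\{-1\}$; direct computation gives $b(u)^{p^{n}}=b(u)$ (so $b$ maps into $F$), $b(u^{-1})=b(u)$, $b^{-1}(1)=(U_{1}\cup U_{2})\setminus\{-1\}$, and $b(u)\neq 0$ throughout (the only $u\in U$ with $u^{k}+u^{1-k}=0$ is $u=-1$, since $\gcd(2k-1,p^{n}+1)=1$, which is exactly invertibility of $s$).

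The positive odd value is $W_{L,s}(1)=(d_{1}+d_{2}-2)p^{n}$ by \cref{W1}: since $p^{n}$ is odd and exactly one of $k,k-1$ is even, exactly one of $d_{1},d_{2}$ is even, so $d_{1}+d_{2}$ is odd and, as $d_{1}+d_{2}\ge5$, we have $d_{1}+d_{2}-2=2\beta+1$ for some integer $\beta\ge1$. The value $0$ is in the spectrum by \cref{vanishingp^2n} (alternatively, $|b^{-1}(1)|=d_{1}+d_{2}-2\ge3$ forces $|b(U\setminus\{-1\})|<p^{n}-1$, so $N(a)=1$ for some $a\in F^{\times}$). For the positive even value, note $|U|-|U_{1}\cup U_{2}|=p^{n}+2-(d_{1}+d_{2})>0$ since $d_{1}+d_{2}\le 2k-1\le p$; pick $u^{*}\in U\setminus(U_{1}\cup U_{2})$ and set $a^{*}=b(u^{*})\in F^{\times}\setminus\{1\}$. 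Then $u^{*}$ and $(u^{*})^{-1}$ are distinct elements of $b^{-1}(a^{*})$, and for every $a\in F^{\times}\setminus\{1\}$ the fibre $b^{-1}(a)\subseteq U\setminus\{\pm1\}$ is stable under the fixed-point-free involution $u\mapsto u^{-1}$, hence even; so $N(a^{*})$ is odd and $\ge 3$, giving $W_{L,s}(a^{*})=2\alpha p^{n}$ with $\alpha=(N(a^{*})-1)/2\ge1$.

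For $-p^{n}$ I would argue by power moments. For each $u\in U$ the map $a\mapsto a+\bar a u$ is $F$-linear from $L$ onto the one-dimensional subspace $\{w\in L:\bar w=u^{-1}w\}$, which contains $u^{k}+u^{1-k}$; hence it has exactly $p^{n}$ preimages of that element, so $\sum_{a\in L}\widetilde N(a)=p^{n}(p^{n}+1)$. As $\widetilde N(0)=1$ (again by $\gcd(2k-1,p^{n}+1)=1$) and $\sum_{a\in F^{\times}}(N(a)-1)=\#(U\setminus\{-1\})=p^{n}$ (all values of $b$ lie in $F^{\times}$), it follows that $\sum_{a\in L^{\times}\setminus F^{\times}}(\widetilde N(a)-1)=0$. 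The second power moment $\sum_{a\in L}W_{L,s}(a)^{2}=|L|^{2}=p^{4n}$ gives $\sum_{a\in L}(\widetilde N(a)-1)^{2}=p^{2n}$, whence
\[
\sum_{a\in L^{\times}\setminus F^{\times}}(\widetilde N(a)-1)^{2}=p^{2n}-\sum_{a\in F^{\times}}|b^{-1}(a)|^{2}\ \ge\ p^{2n}-\bigl(\max_{a}|b^{-1}(a)|\bigr)p^{n}\ \ge\ p^{2n}-(2k-2)p^{n}\ >\ 0,
\]
using $\sum_{a}|b^{-1}(a)|=p^{n}$, the degree bound $N(a)\le 2k-1$ (from the polynomial $u^{2k-1}-au^{k}-au^{k-1}+1$), the hypothesis $k<\tfrac{p}{2}+1$ (so $2k-2<p$), and $n\ge 2$. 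So the integers $\widetilde N(a)-1$ over $a\in L^{\times}\setminus F^{\times}$ are $\ge -1$, sum to $0$, and are not all zero; one of them equals $-1$, i.e. $W_{L,s}(a)=-p^{n}$ for some $a\notin F$. (The same bookkeeping forces an $a'\notin F$ with $\widetilde N(a')\ge 2$, hence $W_{L,s}(a')\ge p^{n}$.)

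The fifth value is where I expect the real difficulty: one must produce a Weil value outside $\{0,-p^{n},2\alpha p^{n},(2\beta+1)p^{n}\}$, and the first two power moments alone are consistent with a four-element spectrum, so this cannot be purely formal. The plan is to evaluate $W_{L,s}$ at one more explicit point, most naturally $a=-1$: one has $N(-1)=1+\#\{u\neq-1:u^{k-1}=-1\text{ or }u^{k}=-1\}$, which is expressible in closed form through $d_{1},d_{2}$ and $v_{2}(p^{n}+1)$ (the exponent of $2$ in $p^{n}+1$), and in the principal cases comes out to $(d_{1}+d_{2}-1)p^{n}$ or $(d_{1}-1)p^{n}$ — typically distinct from the four values found, since $d_{1}+d_{2}-1$ is even and $\ne d_{1}+d_{2}-2$. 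A short case split on the parity of $k$ (equivalently on which of $d_{1},d_{2}$ is even) and on whether $v_{2}(k)<v_{2}(p^{n}+1)$, handling the residual subcase by a second auxiliary value of $a$ (for instance a non-trivial element of $U_{1}$ when $d_{1}\ge3$, whose Weil value can be computed directly), should complete the proof; I regard making this case analysis uniform as the main obstacle. An alternative is to bring in the third power moment $\sum_{a\in L}W_{L,s}(a)^{3}=|L|\sum_{x,y\in L}\mu(x^{s}+y^{s}-(x+y)^{s})$ — whose right-hand side is a Niho-type sum amenable to the same reduction — and show its value is incompatible with exactly four spectral values once $n\ge2$.
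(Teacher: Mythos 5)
Your reduction to the unit circle $U$ and the counting function $\widetilde N(a)$ is sound, and the arguments you give for the four values $0$, $-p^n$, $2\alpha p^n$, $(2\beta+1)p^n$ all go through; they recover, by a somewhat different route, what the paper gets in \cref{4valued} from \cref{vanishingp^2n}, \cref{W1}, \cref{firstmoment} and \cref{cyclotomykatz}. But those four values are only the ``moreover'' clause; the main assertion of the theorem is the existence of a \emph{fifth} value, and there your proposal stops at a plan rather than a proof. The route via $a=-1$ does not close up as stated: by \cref{W-1}, $W_{L,s}(-1)=p^n(d_1\epsilon_1+d_2\epsilon_2-1)$ with $\epsilon_i\in\{0,1\}$ determined by the parity of $(p^n+1)/d_i$, so $W_{L,s}(-1)$ is always $-p^n$, $0$, or a positive \emph{even} multiple of $p^n$, and nothing in your construction of $a^*$ prevents it from coinciding with the even value $2\alpha p^n$ you already produced. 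The case analysis you defer (including the ``residual subcase'') is exactly the missing content. Your alternative suggestion --- the third power moment --- is in fact the paper's method, but you do not carry it out either.

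Concretely, the paper assumes for contradiction that the spectrum is exactly $\{-p^n,0,2\alpha p^n,(2\beta+1)p^n\}$ with multiplicities $m_1,\dots,m_4$, and writes down four linear equations: $\sum_i m_i=p^{2n}$ together with the first three power moments of \cref{powermoment}, where the third moment requires the count $|R|=p^n+(d_1-1)(d_1-2)+(d_2-1)(d_2-2)$ established in \cref{xx^slemma} (itself a nontrivial Niho-type computation you would still need to supply). Solving the system over $\mathbb{Q}$ and analyzing signs of the explicit numerators and denominators --- this is precisely where the hypotheses $k<\frac{p}{2}+1$ and $n\ge 2$ enter, to bound $d_1,d_2$ against $p^n$ --- first forces $\alpha=1$, and then the resulting closed form for $m_4$ fails to be an integer, a contradiction. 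Until you either execute this third-moment computation or make your $a=-1$ case split exhaustive (with a genuinely new value produced in every branch), the theorem is not proved.
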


\begin{remark}\label{remarkk}
If $k=0$ or $1$ then $s$ is degenerate. So in general, we can take $2 \leq k \leq p^n$, since $k+p^n+1$ gives the same exponent $s$ $\pmod{p^{2n}-1}$ as $k$ over $L$.

For the case of $n=1$ in \cref{theoremM_i}, taking integer $k$ such that $p^{1/2}>2(k-1)$ would yield the same conclusion.
\end{remark}

\begin{theorem}\label{5valuesconjecturep^n3mod4}
Let $L$ be a quadratic extension of a finite field $F$ of order $p^n$, where $p$ is an odd prime. Let $s=1+k(p^n-1)$ be an invertible Niho exponent over $L$, $d_1=\gcd(k, p^n+1)$, and $d_2=\gcd(k-1,p^n+1)$. If $d_1+d_2=3$ and $p^n \equiv 11 \pmod{12}$, then the Weil spectrum over $L$ is at least five-valued. Moreover, four of those five values are $\{0, -p^n, p^n, 2 p^n\}$.
\end{theorem}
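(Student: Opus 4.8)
\emph{Proof strategy.} Write $q_0 = p^n$, so $|L| = q_0^2$, and let $U=\{u\in L^\times : u^{q_0+1}=1\}$ be the norm-one subgroup, of order $q_0+1$. As established above (cf.\ \cref{W1} and the surrounding discussion), for every $a\in L$ one has $W_{L,s}(a)=q_0\bigl(T(a)-1\bigr)$, where $T(a)$ is the number of $u\in U$ with $u^{2k-1}-a^{q_0}u^{k}-a\,u^{k-1}+1=0$, equivalently $u^{k}+u^{1-k}=a+a^{q_0}u$. In particular every Weil value lies in $q_0\mathbb{Z}_{\ge-1}$, so $-q_0$ is the unique value the spectrum can contain that is negative, attained exactly when $T(a)=0$. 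When $a\in U$ we have $a^{q_0}=a^{-1}$ and the defining relation factors as $(u^{k-1}-a^{-1})(u^{k}-a)$; at $a=1$ and $a=-1$ this is $(u^{k-1}-1)(u^{k}-1)$ and $(u^{k-1}+1)(u^{k}+1)$.

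\emph{Three of the four named values.} Taking $a=1$: a common root of $u^{k-1}=1$ and $u^{k}=1$ forces $u=1$, so $T(1)=\gcd(k-1,q_0+1)+\gcd(k,q_0+1)-1=d_1+d_2-1=2$, hence $W_{L,s}(1)=q_0$. Taking $a=-1$: the root sets of $u^{k-1}=-1$ and $u^{k}=-1$ are disjoint (a common root would again force $u=1$), and since $p^n\equiv11\pmod{12}$ gives $4\mid q_0+1$, while one of $d_1,d_2$ equals $2$ and the other $1$, the relevant $2$-adic valuations of $k$ and $k-1$ both lie strictly below $v_2(q_0+1)$, so both equations are solvable, with $d_1$ and $d_2$ solutions respectively; thus $T(-1)=d_1+d_2=3$ and $W_{L,s}(-1)=2q_0$. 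Finally, $s$ is an invertible Niho exponent over $L$, so \cref{vanishingp^2n} shows $s$ is singular: $0$ belongs to the spectrum.

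\emph{The value $-q_0$.} Character orthogonality gives the power moments $\sum_{a\in L}W_{L,s}(a)=q_0^2$ and $\sum_{a\in L}W_{L,s}(a)^2=q_0^4$; since $W_{L,s}(0)=0$ these read $\sum_{a\in L^\times}(T(a)-1)=q_0$ and $\sum_{a\in L^\times}(T(a)-1)^2=q_0^2$. Suppose, toward a contradiction, $T(a)\ge1$ for all $a\in L^\times$; put $e_a=T(a)-1$ and $S=\{a:e_a\ge1\}$. Then $\sum_S e_a=q_0$ and $\sum_S e_a^2=q_0^2$, so $q_0^2\le(\max_S e_a)\sum_S e_a=q_0\max_S e_a$, and since $T(a)\le|U|=q_0+1$ the maximum is exactly $q_0$, whence $|S|=1$. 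Then $W_{L,s}$ would be $0$ off a single point and $q_0^2$ there — at most two-valued, contradicting \cref{2values} (Niho exponents are nondegenerate). Hence some $T(a)=0$, and $-q_0$ is in the spectrum.

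\emph{A fifth value, and the main obstacle.} It remains to exclude the spectrum being exactly $\{-q_0,0,q_0,2q_0\}$, i.e.\ $T(a)\in\{0,1,2,3\}$ for all $a\in L^\times$. Here the earlier tricks stop: on the locus $a\in U$ — the only $a$ for which the defining polynomial factors transparently — one always has $T(a)\le3$, so no single explicit $a$ yields a fifth value, and the first two power moments are by themselves consistent with a four-valued spectrum. The plan is to bring in the third power moment: from the homogeneity $g(tx,ty)=t^{s}g(x,y)$ of $g(x,y)=x^{s}+y^{s}-(x+y)^{s}$ and summation over $t\in L^\times$ one gets $\sum_{a\in L}W_{L,s}(a)^3=q_0^4(M+2)$, where $M=\#\{z\in L\setminus\{0,-1\}:(1+z)^{s}=1+z^{s}\}$. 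Writing $n_j=\#\{a\in L^\times:T(a)=j\}$, the three power-moment identities together with $\sum_j n_j=q_0^2-1$ determine each $n_j$ as an affine function of $M$; one then evaluates $M$ — noting that all $q_0-2$ elements of $F\setminus\{0,-1\}$ are solutions (since $s\equiv1\pmod{q_0-1}$) and that the anharmonic group $\langle z\mapsto-1-z,\ z\mapsto1/z\rangle\cong S_3$ permutes the remaining solutions, which controls $M$ modulo small integers — and checks that $d_1+d_2=3$ together with $p^n\equiv11\pmod{12}$ forces some $n_j$ to be negative or non-integral, the desired contradiction. This evaluation of $M$ (equivalently, of the number of solutions of $(1+z)^{s}=1+z^{s}$) and its numerical incompatibility with $p^n\equiv11\pmod{12}$ is the step I expect to require the most care; everything before it is bookkeeping with the factorizations at $a=\pm1$ and with \cref{vanishingp^2n} and \cref{2values}.
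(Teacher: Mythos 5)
Your treatment of the four values $\{0,-p^n,p^n,2p^n\}$ is sound and matches the paper's route: $W_{L,s}(1)=p^n$ and $W_{L,s}(-1)=2p^n$ come from the factorization over the norm-one subgroup (the paper's \cref{W1} and \cref{W-1}), the value $0$ from \cref{vanishingp^2n}, and $-p^n$ from the first two power moments together with \cref{2values} (your max-of-$e_a$ argument is a clean variant of the $(\sum W)^2>\sum W^2$ argument in the paper's \cref{4valued}). The genuine gap is that the theorem's actual content --- the existence of a fifth value --- is left as a plan. You correctly identify the paper's strategy (third power moment plus a count of the solutions of $(1+z)^s=1+z^s$, i.e.\ of the paper's set $R$), but you neither evaluate $M$ nor exhibit the contradiction, and you yourself flag this as the step requiring the most care.

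Moreover, the route you sketch for evaluating $M$ is not sufficient as stated. The anharmonic $S_3$-action only controls $M$ modulo $6$ up to the contributions of short orbits outside $F$ (for instance the size-two orbit of the primitive cube roots of unity) and of full size-$6$ orbits outside $F$; a congruence for $M$ mod $6$ alone does not decide the integrality of the multiplicities $n_j$, since shifting $M$ by $2$ already destroys the contradiction. What the paper actually does (Lemma \ref{xx^slemma}) is pin down $|R|$ exactly: comparing norms and traces shows every solution outside $F$ satisfies $x^s=x$ or $x^s=x^{p^n}$, giving $|R|=p^n+(d_1-1)(d_1-2)+(d_2-1)(d_2-2)$, which equals $p^n$ when $d_1+d_2=3$ (so that $\{d_1,d_2\}=\{1,2\}$). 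Feeding this into the four moment equations with the values $-p^n,0,2p^n,p^n$ yields the multiplicity $m_1=p^n(p^n-1)/3$ for the value $-p^n$; since $p^n\equiv 11\pmod{12}$ forces $p^n\equiv 2\pmod 3$, we get $3\nmid p^n(p^n-1)$ and $m_1\notin\mathbb{Z}$ --- that non-integrality is the contradiction your plan is missing. To complete your proof you would need to supply the exact count of $R$ (or an equivalent exact determination of $M$), not merely its residue class.
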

\bk

\section{Preliminaries}
We review some techniques with characters over a general finite field $F$ of order $q=p^n$.

For $a\in F$, let $\mu_a(x)=\mu(ax)$. Then the set of additive characters $\{ \mu_a : a \in F\}$ form an orthonormal  basis, with respect to the following inner product, for the space of functions from $F$ to $\mathbb{C}^\times$. One observes that the additive character $\mu(x)$ in our introduction is $\mu_1(x)$. 

\begin{definition}
For all functions $f,g: F \to \mathbb{C}^\times$, we define the inner product
\[
\langle f, g \rangle = \frac{1}{q}\sum_{x \in F}f(x) \overline{g(x)},
\] where $\overline{\,  \cdot \, }$ stands for complex conjugation.
\end{definition}

If we let $f_s$ be the function 
$f_s(x):=\mu(x^s)$, our Weil sum is the coordinates (or the Fourier coefficients) up to a factor of $1/q$ of $f_s$ with respect to the orthonormal basis  $\{ \mu_a : a \in F\}$. More precisely, the Weil sum becomes 
\[
W_{F,s}(a)=\sum_{x \in F}\mu(x^s-ax)=\sum_{x \in F}\mu(x^s)\overline{\mu_a(x)}=q\cdot \langle f_s, \mu_a \rangle,
\]
and
$$f_s=\frac{1}{q}\sum_{a \in F} W_{F,s}(a) \cdot \mu_a.$$
On the other hand,  
$$
 \langle f_s, f_s \rangle= 1,
$$
and hence,
\begin{align*}
    1=\left\langle \sum_{a \in F} \frac{1}{q}W_{F,s}(a)\cdot \mu_a,\sum_{b \in F}\frac{1}{q}W_{F,s}(b)\cdot \mu_b \right\rangle =\frac{1}{q^2}\sum_{a \in F}\bl |W_{F,s}(a)|^2. \bk
\end{align*}

\bl The Weil sum is shown to only take real values \cite[Theorem $2.1$(c)]{katz}, so the relation above becomes \bk
\begin{align}\label{secondpowermoment} 
    1=\frac{1}{q^2}\sum_{a \in F} W_{F,s}(a)^2.
\end{align}
Relation \eqref{secondpowermoment} can also be proved using the cross-correlation function in  \cite{katz}. In fact, it is called the second power moment of the Weil sum. In general we can consider the summation of all Weil \bl sums \bk in the finite field raised to a positive integer $m$. This is called the $m$th power moment. For the first few moments we have the following result which was proved in \cite{katz}.

\begin{lemma}\label{powermoment}
Let $F$ be a finite field of order $p^n$ and $s$ be a fixed invertible exponent. Then
\begin{itemize}
    \item[(i)] $\displaystyle\sum_{a \in F} W_{F,s}(a)=p^n$,
    \item[(ii)] $\displaystyle\sum_{a \in F} W_{F,s}(a)^2=p^{2n}$, and
    \item[(iii)] $\displaystyle\sum_{a \in F} W_{F,s}(a)^3=p^{2n} \cdot |R|$, where $R=\{x \in \bll F \bk \mid  (1-x)^s+x^s-1=0\}$.
    
\end{itemize}

\end{lemma}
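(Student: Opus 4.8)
The plan is to compute each of the three sums by interchanging the order of summation over $a\in F$ and $x\in F$, using the orthogonality relation $\sum_{a\in F}\mu(ax)=q$ if $x=0$ and $0$ otherwise. For part (i), I would write
\[
\sum_{a\in F}W_{F,s}(a)=\sum_{a\in F}\sum_{x\in F}\mu(x^s)\mu(-ax)
=\sum_{x\in F}\mu(x^s)\sum_{a\in F}\mu(-ax).
\]
The inner sum vanishes unless $x=0$, in which case it equals $q=p^n$, and $\mu(0^s)=\mu(0)=1$, so the total is $p^n$. Part (ii) is exactly the second power moment \eqref{secondpowermoment} already established in the excerpt via the inner-product identity $\langle f_s,f_s\rangle=1$; alternatively I would expand $W_{F,s}(a)^2=\sum_{x,y}\mu(x^s-y^s)\mu(-a(x-y))$, swap the order of summation, and use orthogonality to force $x=y$, leaving $\sum_{a}W_{F,s}(a)^2=p^n\cdot\#\{(x,y):x=y\}$... wait, that gives $p^n\cdot p^n=p^{2n}$, so the same answer.

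For part (iii), the same maneuver applied to the cube gives
\[
\sum_{a\in F}W_{F,s}(a)^3
=\sum_{a\in F}\sum_{x,y,z\in F}\mu(x^s+y^s+z^s)\mu(-a(x+y+z))
=p^n\sum_{\substack{x,y,z\in F\\ x+y+z=0}}\mu(x^s+y^s+z^s).
\]
Setting $z=-(x+y)$ and using that $s\equiv 1\pmod{p-1}$ is \emph{not} needed here — rather, since $s$ is odd in the relevant characteristic (one should note $(-1)^s=-1$ when $p$ is odd, which holds since $s$ is coprime to $q-1$ hence to the even number $p-1$... actually more carefully $s$ odd follows from $\gcd(s,q-1)=1$ and $q-1$ even), we get $(-(x+y))^s=-(x+y)^s$, so the exponent becomes $x^s+y^s-(x+y)^s$. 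Then I would substitute $x=y t$ over $y\ne 0$ (handling $y=0$ separately, which forces $x=0$ too and contributes $1$) and use that $t\mapsto t^s$ and the homogeneity of the condition to reduce the double sum: for fixed $t$, summing $\mu\big(y^s(t^s+1-(t+1)^s)\big)$ over $y\in F$ gives $p^n$ if the coefficient $t^s+1-(t+1)^s$ vanishes and $0$ otherwise (again by orthogonality, since $y\mapsto y^s$ is a permutation). Renaming the variable so the condition reads $(1-x)^s+x^s-1=0$ (replacing $t$ by $-x$, or equivalently reorganizing the symmetric condition $x+y+z=0$), one obtains $\sum_a W_{F,s}(a)^3 = p^{2n}\cdot|R|$ with $R=\{x\in L:(1-x)^s+x^s-1=0\}$ — here $L=F$, and the slight notational mismatch ($L$ vs.\ $F$) in the statement is harmless.

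The computation is entirely routine; the only genuinely delicate point is the bookkeeping in part (iii) — correctly splitting off the degenerate cases where one of $x,y,z$ is zero, making sure the substitution $x=yt$ is a bijection on the relevant range, and verifying that the resulting coefficient is exactly the defining polynomial of $R$ up to the permissible change of variable. I expect that to be the main obstacle, though it is more a matter of care than of any real difficulty, since each step is forced by orthogonality and the permutation property of $x\mapsto x^s$. No new ideas beyond the Fourier/orthogonality framework already set up in the Preliminaries section are required.
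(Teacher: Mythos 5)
The paper itself gives no proof of this lemma --- it simply attributes it to \cite{katz} --- so there is no in-paper argument to compare against; your orthogonality/interchange-of-summation computation is the standard route and is surely in the spirit of the cited source. Parts (i) and (ii) are correct as you describe them (for (ii) one can even avoid invoking realness of $W_{F,s}(a)$ by expanding $W_{F,s}(a)^2$ directly and forcing $x+y=0$, which gives $\sum_x\mu(x^s+(-x)^s)=\sum_x\mu(0)=q$ since $s$ is odd).

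In part (iii), however, your bookkeeping of the degenerate slice is wrong in a way that changes the answer. After forcing $x+y+z=0$ you must evaluate $S=\sum_{x,y\in F}\mu\bigl(x^s+y^s-(x+y)^s\bigr)$. The slice $y=0$ does \emph{not} force $x=0$: every $x\in F$ gives summand $\mu(x^s-x^s)=\mu(0)=1$, so that slice contributes $p^n$, not $1$. For $y\neq 0$ the substitution $x=yt$ is a bijection onto $\{(x,y):y\in F^\times\}$, and for fixed $t$ the sum $\sum_{y\in F^\times}\mu\bigl(y^s c_t\bigr)$ with $c_t=t^s+1-(1+t)^s$ equals $p^n-1$ if $c_t=0$ and $-1$ otherwise (it is a sum over $F^\times$, not over $F$ as you wrote; summing over all of $F$ there double-counts $y=0$). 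Putting $N=\#\{t:c_t=0\}$ one gets $S=p^n+\bigl((p^n-1)N-(p^n-N)\bigr)=p^nN$, hence $\sum_a W_{F,s}(a)^3=p^{2n}N$, and $N=|R|$ via $t\mapsto -t$. Your version, taken literally ($1$ from the degenerate slice plus $p^nN$ from summing over all $y\in F$), yields $p^{2n}|R|+p^n$, which contradicts the lemma. The fix is exactly the careful splitting you yourself flagged as the delicate point; once done as above, the proof is complete.
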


As for the settings of a quadratic extension $L$ over $F$, we have the following moment property of the Weil sum in different orbits under the multiplication action of $F^{\times}$ on $L^{\times}$. 

\begin{lemma}\label{firstmoment}
Let $F$ be a finite field of order $p^n$ and $L$ be a quadratic extension of $F$. Suppose that $s$ is an invertible exponent \bl over $L$ \bk and $s\equiv 1\pmod{\bl p^n-1 \bk}$.
Then for a fixed $b \in L^{\times}$,
\[
\sum_{a\in F} W_{L,s}(ab)=
\begin{cases}
\rd p^{2n} & \text{if $b \in F$,}\\
0 & \text{otherwise.}
\end{cases}
\] 
\end{lemma}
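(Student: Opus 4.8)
\noindent\textit{Plan.} The idea is to open the defining double sum, swap the two summations, and collapse the resulting inner sum to the indicator of a trace condition; everything then reduces to a single additive character sum over a line, which the hypothesis $s\equiv 1\pmod{p^n-1}$ makes transparent. Writing $\mu$ for the canonical additive character of $L$,
\[
\sum_{a\in F}W_{L,s}(ab)=\sum_{a\in F}\sum_{x\in L}\mu(x^s-abx)=\sum_{x\in L}\mu(x^s)\sum_{a\in F}\mu(-abx).
\]
The point is that for $a\in F$ one has $\Tr_{L/\F_p}(abx)=\Tr_{F/\F_p}\!\bigl(a\,\Tr_{L/F}(bx)\bigr)$, so $\sum_{a\in F}\mu(-abx)$ is a sum of the canonical character of $F$ over all $F$-multiples of $-\Tr_{L/F}(bx)$, and therefore equals $p^n$ when $\Tr_{L/F}(bx)=0$ and $0$ otherwise. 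Hence
\[
\sum_{a\in F}W_{L,s}(ab)=p^n\sum_{x\in U_b}\mu(x^s),\qquad U_b:=\{x\in L:\Tr_{L/F}(bx)=0\}=b^{-1}\ker(\Tr_{L/F}),
\]
and since $[L:F]=2$ the set $U_b$ is a single $F$-line, say $U_b=\{fc:f\in F\}$ for a fixed nonzero $c$. (This reduction can equally be run through the inner-product formalism of \Cref{} the Preliminaries, using $\sum_{a\in F}\mu_{ab}=|F|\cdot\mathbbm{1}_{U_b}$.)

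Because $s\equiv 1\pmod{p^n-1}$, every $f\in F$ satisfies $f^s=f$, so $(fc)^s=f\,c^s$ and, applying the same character-sum fact once more on $F$,
\[
\sum_{x\in U_b}\mu(x^s)=\sum_{f\in F}\mu(f\,c^s)=
\begin{cases}
p^n & \text{if }\Tr_{L/F}(c^s)=0,\\
0 & \text{otherwise.}
\end{cases}
\]
Thus $\sum_{a\in F}W_{L,s}(ab)$ is either $p^{2n}$ or $0$, according as $c^s\in\ker(\Tr_{L/F})$ or not, and it remains only to identify when this happens in terms of $b$.

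This last identification is the one step that is not purely formal, and it is where the hypothesis that $s$ be \emph{invertible} over $L$ (not merely $s\equiv 1\pmod{p^n-1}$) enters. Writing $c=b^{-1}u_0$ with $u_0$ a fixed nonzero element of the one-dimensional space $\ker(\Tr_{L/F})$, the condition $c^s\in\ker(\Tr_{L/F})=Fu_0$ is equivalent to $b^{-s}u_0^{\,s-1}\in F$; and $u_0^{\,s-1}=(u_0^{\,p^n-1})^{k}$ lies in $F^{\times}$ — indeed, for odd $p$ one may take $u_0$ with $u_0^2\in F$, whence $u_0^{\,p^n-1}=-1$ because the nontrivial element of $\mathrm{Gal}(L/F)$ sends $u_0$ to $-u_0$. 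So the condition collapses to $b^s\in F$. Finally $b^s\in F\iff b\in F$: the forward direction holds because $b^s\in F^{\times}$ forces $\mathrm{ord}(b)\mid s(p^n-1)$ while also $\mathrm{ord}(b)\mid (p^n-1)(p^n+1)$, hence $\mathrm{ord}(b)\mid (p^n-1)\gcd(s,p^n+1)=p^n-1$ since $\gcd(s,p^{2n}-1)=1$ forces $\gcd(s,p^n+1)=1$; the reverse implication is immediate. Assembling these steps gives the stated value for $b\in F$ and the vanishing for $b\notin F$. I expect the main (modest) obstacle to be precisely the algebraic bookkeeping of this last paragraph, namely pinning the $F$-line $F c^s$ to $\ker(\Tr_{L/F})$ in exactly the right cases; the earlier reductions are routine orthogonality of characters.
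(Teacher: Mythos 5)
Your argument for the case $b\notin F$ is essentially the paper's own proof: open the double sum, swap the order of summation, use orthogonality of the additive characters of $F$ to reduce to $p^n\sum_{x\in U_b}\mu(x^s)$ with $U_b=b^{-1}\ker(\Tr_{L/F})$ a single $F$-line, and then use $s\equiv 1\pmod{p^n-1}$ to collapse the remaining sum. You are in fact slightly more careful than the paper at the last step: the paper only records that $\Tr_{L/F}(x_0)\neq 0$ for a generator $x_0$ of the line, while what is needed is $\Tr_{L/F}(x_0^{s})\neq 0$; your identification of exactly when $Fc^{s}$ lies in $\ker(\Tr_{L/F})$, reduced to $b^{s}\in F\iff b\in F$ via $\mathrm{ord}(b)\mid\gcd\bigl(s(p^n-1),(p^n-1)(p^n+1)\bigr)=(p^n-1)\gcd(s,p^n+1)=p^n-1$, supplies that missing justification. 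You also prove the case $b\in F$ directly, which the paper delegates to \cite[Lemma 2.5]{cyclotomy}.

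The one genuine problem is your closing claim that assembling the steps ``gives the stated value for $b\in F$.'' It does not: your computation yields $p^{2n}$ (the line is $\ker(\Tr_{L/F})=Fu_0$ with $u_0^{s}=(-1)^{k}u_0$ still of trace zero, so the inner sum contributes $p^n$, times the outer factor $p^n$), whereas the statement asserts $p^{2n}-1$. You should flag this discrepancy rather than assert agreement. In fact $p^{2n}$ is the correct value: summing the lemma over a set of coset representatives of $F^\times$ in $L^\times$ and comparing with the first power moment $\sum_{a\in L}W_{L,s}(a)=p^{2n}$ from \cref{powermoment} forces the $b\in F$ case to equal $p^{2n}$, and a direct check in $\mathbb{F}_9$ with $s=5$ gives $W(0)+W(1)+W(2)=0+3+6=9=p^{2n}$. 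So your mathematics is sound and the printed constant appears to be off by one; but as written, the final sentence of your proof asserts something your own derivation contradicts.
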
 

\begin{proof}
The first case for $b \in F$ was proved in \cite[\bll Lemma \bk 2.5]{cyclotomy}. So we will show the second equality here.
Observe that
\begin{align*}
    \sum_{a \in F}W_{L,s}(ab) &= \sum_{a \in F}\sum_{x \in L} \mu(x^s-abx)\\ 
    &=\sum_{x \in L} \mu(x^s)\sum_{a \in F}\zeta_p^{\Tr_{F/\mathbb{F}_p}(-a(Tr_{L/F}(bx)))}. 
\end{align*}

If $\Tr_{L/F}(bx) \neq 0$, then the inner sum is \bl the sum of all $p$th roots of unity precisely $p^{n-1}$ times, so it is $0$. \bk

Hence,
\begin{align*}
    \sum_{a\in F}W_{L,s}(ab) &=p^n \cdot \sum_{\substack{x \in L \\ \Tr_{L/F}(bx)=0}}\mu(x^s).
\end{align*}

\bl
Now we consider the equation   $0=\Tr_{L/F}(y)=y^{p^n}+y=y(y^{p^n-1}+1)$ over $L$. Note that the polynomial $y^{p^n}+y$ has formal derivative of $1$ so it is separable over $L$ with $p^n$ distinct roots. 
\bk
Let $x_0$ be a non-zero element such that $bx_0$ is a nonzero solution to $\Tr_{L/F}(y)=0$. Then all the roots of the polynomial are of the form $cbx_0$, where $c \in F$. Note that $\Tr_{L/F}(x_0)\neq 0$ because $b\not\in F$. 

\rd Now, suppose that $0=\Tr_{L/F}(x_0^s)=x_0^s(1+x_0^{s(p^n-1)})$. This means $x_0^{s(p^n-1)}=-1$ since $x_0$ is nonzero. Then $x_0^{p^n-1}=(-1)^{1/s}$, where $1/s$ is the inverse of $s$ modulo $p^{2n}-1$. If $p$ is odd, then $1/s$ is odd and $x_0^{p^n-1}=-1$, which contradicts $\Tr_{L/F}(x_0)\neq 0$. If $p=2$, then $x_0^{p^n-1}=1$. This also contradicts $\Tr_{L/F}(x_0)\neq 0$ in $L=\mathbb{F}_{2^{2n}}$. Therefore, $\Tr_{L/F}(x_0^s) \neq 0$.
\bk 

Hence, 
\begin{align*}
       \sum_{a\in F}W_{L,s}(ab) &= p^n \cdot \sum_{c \in F} \mu((cx_0)^s) \\
       &=p^n \cdot \sum_{c \in F} \zeta_p^{\Tr_{F/\mathbb{F}_p}(c\Tr_{L/F}(x_0^s))}=0.
\end{align*}

\bl

Note that the second-to-last equality follows from $s \equiv 1 \pmod{p^n-1}$ and $c^s=c$ in $F$.

\bk
\end{proof}

The conclusion of \cref{firstmoment} also implies the first moment property of the Weil sum.

\section{The Vanishing Conjecture and Bounds on $W_{L,s}(a)$}
For the rest of this paper (sections 3 and 4) we turn our focus to Niho exponents. \bl Let $p$ be an odd prime \bk. Our setting is a finite field $F$ of order $q=p^n$, together with a quadratic extension $L$ over $F$. Let $s$ be a nondegenerate invertible exponent. By \cref{powerlemma}, we can take $s \equiv 1 \pmod{p^n-1}$, then $s=k(p^n-1)+1$ for some nonnegative integer $k$. 

In this section, we give an alternative proof of the Vanishing Conjecture for the case of Niho exponent $s$ (i.e., \cref{vanishingp^2n}) to \cite{helleseth_lahtonen_rosendahl}, and study the bounds on the Weil sum for various values of $a$ and $s$. We first start with a lemma that gives a formula for Weil sum $W_{L,s}(a)$ based on the cardinality of a relevant set.

\begin{lemma}\label{divisibilityp^nlemma}
Let $L$ be the quadratic extension of the finite field $F$. Assume that $s$ is an invertible Niho exponent over $L$. Let $K_{a,s}=\{x \in L^\times\mid \Tr_{L/F}(x^s- a x)=0\}$. 

Then $|K_{a,s}|$ is a multiple of $(p^n-1)$ and 
$$
W_{L,s}(a)=\displaystyle p^n\cdot \frac{|K_{a,s}|}{p^n-1}-p^n.
$$
Furthermore, $W_{L,s}(a)$ is divisible by $p^n$.
\end{lemma}

\begin{remark}
The first statement of the theorem was also proved in \cite{cyclotomy}. 
\end{remark}

\begin{proof}

By \cref{powerlemma}, we can replace the condition $s \equiv p^j \pmod{p^n-1}$ by $s \equiv 1 \pmod{p^n-1}$.

\rd As seen in the proof of \cref{firstmoment}, the equation $y^{p^n}+y=0$ has $p^n$ distinct roots over $L$. \bk Hence, $$
|K_{0,s}|=|\{x \in L^\times\mid (x^s)^{p^n} \rd + \bk x^s=0\}| =p^n-1.
$$
So the identity $0=W_{L,s}(0)=\displaystyle p^n\cdot \frac{|K_{0,s}|}{p^n-1}-p^n$ holds.  We now assume $a\neq 0$.
We have 
\begin{align*}
    W_{L,s}(a)=\sum_{x \in L^\times}\mu(x^s-ax)+\mu(0)=\sum_{x \in L^\times}\mu(x^s-ax)+1.
\end{align*}

For any $y \in L^\times$, we can write $y=bx$ for some $b\in F^{\times}$, and 
$$
  \Tr_{L/F}((bx)^s-a(bx))=\Tr_{L/F}(bx^s-abx)=b\Tr_{L/F}(x^s-ax). 
$$ Therefore, each element $y$ in the coset $\bar x:=xF^\times$ either lies in $K_{a,s}$ or not depending on whether $x$ lies in $K_{a,s}$ or not.  This implies that $|K_{a,s}|$ is a multiple of $|F^\times|=p^n-1$. 

We then rewrite $\displaystyle\sum_{ x \in L^\times}\mu(x^s-ax) $ as follows. 

\begin{align*}
    \sum_{ x \in L^\times}\mu(x^s-ax) &=\sum_{ \bl \bar x \in L^{\times}/F^{\times} \bk }\sum_{b \in F^\times}\zeta_p^{\Tr_{F/\mathbb{F}_p}(\Tr_{L/F}((bx)^s-a(bx)))}\\
    &=\sum_{\bl \bar x \in L^{\times}/F^{\times} \bk}\sum_{b \in F^\times}\zeta_p^{\Tr_{F/\mathbb{F}_p}(b(\Tr_{L/F}(x^s-ax)))}\\
    &=\sum_{\bl \bar x \in L^{\times}/F^{\times} \bk}\sum_{b \in F} \zeta_p^{\Tr_{F/\mathbb{F}_p}(b(\Tr_{L/F}(x^s-ax)))}-(p^n+1).\\
\end{align*}

If $x \notin K_{a,s}$, then for a fixed equivalence class {$\bar x$} 
the inner sum   $\displaystyle\sum_{b \in F}\zeta_p^{\Tr_{F/\mathbb{F}_p}(b(\Tr_{L/F}(x^s-ax)))}=\displaystyle\sum_{u \in F}\zeta_p^{\Tr_{F/\mathbb{F}_p}(u)}$ is $0$; otherwise it is $p^n$.

Thus,

$$
W_{L,s}(a)=\frac{p^n|K_{a,s}|}{p^n-1}-(p^n+1)+1= p^n\cdot \frac{|K_{a,s}|}{p^n-1}-p^n.
$$

This completes the proof. 

\end{proof}

Now we are ready to give a proof of \cref{vanishingp^2n}.

\begin{proof}[Proof of Theorem \ref{vanishingp^2n}]

By \cref{divisibilityp^nlemma}, $W_{L,s}(a)=p^n \cdot h_a$ for some $h_a \in \mathbb{Z}$. Specifically, $h_0=0$ since $W_{L,s}(0)=0$. Applying this and relation \eqref{secondpowermoment} to the setting of a field $L$ of order $q=p^{2n}$, we have
\begin{align}\label{secondpowermomentL}
    q=p^{2n}=\displaystyle\sum_{a \in L^{\times}}h^2_a.
\end{align}

If $h_a=0$ for some $a \in L^\times$, then the Vanishing conjecture holds. To prove this, we use proof by contradiction and assume that $h_a\neq 0$ for all  $a \in L^\times$.
If $|h_a|=1$ for all $a \in L^\times$, then from $\eqref{secondpowermomentL}$, we have that $q-1=q$, which is not possible.
So $|h_{a'}| \geq 2$ for some $a' \in L$,
then
\[
\sum_{a \in L^{\times}}h_a^2
\geq \sum_{\substack{a \in L^\times \\ a \neq a'}}h_a^2+2^2=(q-2)+4=q+2>q,
\]
which also contradicts $\eqref{secondpowermomentL}$.

So at least $W_{L,s}(a)=0$ for some $a \in L^\times$.
\end{proof}

As a consequence to \cref{vanishingp^2n}, the Vanishing Conjecture holds true for $\mathbb{F}_{p^2}$. 

\begin{corollary}
Suppose $s$ is an invertible exponent \bl over $\mathbb{F}_{p^2}$ \bk and $s \equiv 1 \pmod{(p-1)}$, then the Vanishing Conjecture holds for the field $\mathbb{F}_{p^2}$.
\end{corollary}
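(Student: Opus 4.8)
The plan is to deduce this directly from \cref{vanishingp^2n} after splitting on whether $s$ is degenerate. The key preliminary observation is that, over the prime field $\mathbb{F}_p$, the Niho congruence condition is automatic: since $p \equiv 1 \pmod{p-1}$ we have $p^j \equiv 1 \pmod{p-1}$ for every $j \geq 0$, so the hypothesis $s \equiv 1 \pmod{p-1}$ is exactly the condition $s \equiv p^j \pmod{p^n-1}$ appearing in the definition of a Niho exponent over $\mathbb{F}_{p^2}$ (the case $n=1$, where $p^n-1 = p-1$).

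First I would treat the degenerate case. Suppose $s$ is a power of $p$ modulo $p^2-1$. Then \cref{2values} applies and gives $W_{\mathbb{F}_{p^2},s}(a)=0$ for every $a \in \mathbb{F}_{p^2}^{\times}$ with $a\neq 1$. Since $p$ is odd, $|\mathbb{F}_{p^2}^{\times}| = p^2-1 \geq 8$, so such an $a$ certainly exists, and $s$ is singular.

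Next I would treat the nondegenerate case. If $s$ is not a power of $p$ modulo $p^2-1$, then combining this with the congruence observation above shows that $s$ is a Niho exponent over $\mathbb{F}_{p^2}$, and it is invertible by hypothesis. Applying \cref{vanishingp^2n} with $n=1$ produces an $a \in \mathbb{F}_{p^2}^{\times}$ with $W_{\mathbb{F}_{p^2},s}(a)=0$, so again $s$ is singular. In either case $s$ is singular, which (noting $p^2 > 2$) is precisely the assertion of the Vanishing Conjecture for $\mathbb{F}_{p^2}$.

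I do not anticipate any genuine obstacle here: the argument is a case split followed by invocation of two results already in hand. The only point that needs to be spelled out carefully is that the Niho congruence becomes vacuous over the prime field, so that every nondegenerate invertible $s$ with $s \equiv 1 \pmod{p-1}$ is automatically a Niho exponent over $\mathbb{F}_{p^2}$ and hence covered by \cref{vanishingp^2n}, while the degenerate case is immediate from Helleseth's two-valued theorem.
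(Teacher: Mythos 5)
Your argument is correct and is essentially the paper's intended route: the paper offers no written proof, stating the corollary as an immediate consequence of Theorem \ref{vanishingp^2n} applied with $n=1$, which is exactly what you do. Your two additions --- noting that the Niho congruence $s\equiv p^j\pmod{p^n-1}$ collapses to $s\equiv 1\pmod{p-1}$ over the prime field, and disposing of the degenerate case (which Theorem \ref{vanishingp^2n} excludes by definition of a Niho exponent) via Theorem \ref{2values} --- are exactly the details the paper leaves implicit.
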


Lemma \ref{divisibilityp^nlemma} gives a formula for $W_{L,s}(a)$ based on the cardinality of the set $K_{a,s}$. By identifying field elements in $K_{a,s}$, we can bound $|K_{a,s}|$ in order to deduce bounds on $W_{L,s}(a)$.

\begin{proposition} \label{prop1}
Let $a \in F$ and \bl $p$ be an odd prime. \bk Suppose $x^{2(p^n-1)}=1$ and $x \notin F$, then $\Tr_{L/F}(x^s-ax)=0$.
\end{proposition}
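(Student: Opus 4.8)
The plan is to exploit the structure of the exponent $s = 1 + k(p^n-1)$ together with the hypothesis $x^{2(p^n-1)} = 1$. The starting observation is that if $x^{2(p^n-1)} = 1$ then $x^{p^n-1}$ is a square root of unity in $L$, hence $x^{p^n-1} = \pm 1$. If $x^{p^n-1} = 1$ then $x \in F$, which is excluded; so we must have $x^{p^n-1} = -1$, i.e.\ $x^{p^n} = -x$. First I would record this as the key identity: $x^{p^n} = -x$ and consequently $\Tr_{L/F}(x) = x^{p^n} + x = 0$, and similarly $x^{q/p^n}$-type powers simplify. In particular $\Tr_{L/F}(ax) = a\Tr_{L/F}(x) = 0$ for $a \in F$, which disposes of the linear term.

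Next I would compute $x^s$ directly. Since $s = 1 + k(p^n-1)$, we have $x^s = x \cdot (x^{p^n-1})^k = x \cdot (-1)^k$. Then $\Tr_{L/F}(x^s) = (-1)^k \Tr_{L/F}(x) = (-1)^k \cdot 0 = 0$. Combining, $\Tr_{L/F}(x^s - ax) = \Tr_{L/F}(x^s) - \Tr_{L/F}(ax) = 0 - 0 = 0$, which is exactly the claim. One should double-check the edge case where $x^{p^n-1} = 1$ is forced — but that would put $x \in F^\times$, contradicting $x \notin F$, and the only remaining possibility $x = 0$ is handled by convention or excluded since $0 \in F$; so the dichotomy $x^{p^n-1} = \pm 1$ with the minus sign is clean.

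I expect there to be essentially no serious obstacle here: the proposition is a short computation once the identity $x^{p^n-1} = -1$ is extracted, and the Niho form $s \equiv 1 \pmod{p^n-1}$ is precisely what makes $x^s$ reduce to a unit scalar times $x$. The one point requiring a sentence of care is justifying $x^{2(p^n-1)} = 1$ together with $x \notin F$ forces $x^{p^n-1} = -1$: this uses that $F^\times$ consists exactly of the elements of $L^\times$ fixed by the Frobenius $y \mapsto y^{p^n}$, equivalently the elements satisfying $y^{p^n-1} = 1$, so $x^{p^n-1}$ being a root of $t^2 - 1$ other than $1$ must be $-1$ (here $p$ odd guarantees $1 \neq -1$). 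After that the proof is three lines.
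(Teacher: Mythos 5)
Your proof is correct and takes essentially the same approach as the paper: both hinge on extracting $x^{p^n-1}=-1$ from the hypotheses and then using $s\equiv 1\pmod{p^n-1}$ to reduce the trace to a multiple of $\Tr_{L/F}(x)=x+x^{p^n}=0$. Your observation $x^s=(-1)^k x$ is a slightly more direct packaging of the paper's factorization of $(1+x^{p^n-1})$ out of both terms, but the argument is the same.
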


\begin{proof}

Since $x^{2(p^n-1)}=1$ and $x \notin F$, $x^{p^n-1}=-1$. 

We have that $x^{(p^n-1)^2}=x^{p^{2n}-1-2(p^n-1)}=(x^{2(p^n-1)})^{-1}=1$.

Now,
\begin{align*}
    \Tr_{L/F}(x^s-ax) &= \Tr_{L/F}(x^s)-a\Tr_{L/F}(x)\\
                &= x^s+x^{sp^n}-a(x+x^{p^n})\\
                &=x^s(1+x^{(k(p^n-1)+1)(p^n-1)})-ax(1+x^{p^n-1})\\
                &=x^s(1+x^{p^n-1})-ax(1+x^{p^n-1})\\
                &=0.
\end{align*}
\end{proof}
Note that there are $2(p^n-1)$ solutions for the equation $x^{2(p^n-1)}=1$ in $L$, since $\gcd(2(p^n-1),p^{2n}-1)=2(p^n-1)$. This gives a bound on the size of $K_{a,s}$, hence a bound on the Weil sum. 

\begin{proposition} \label{prop2}
Let $p^n \equiv 2 \pmod{3}$. If $x^{3(p^n-1)}=1$, then $\Tr_{L/F}(x^s-x)=0$.
\end{proposition}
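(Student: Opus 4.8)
The plan is to mimic the computation in the proof of \cref{prop1}, where the key mechanism was that $x^{2(p^n-1)}=1$ together with $x\notin F$ forced $x^{p^n-1}=-1$, which in turn collapsed the trace $\Tr_{L/F}(x^s-ax)$ to zero because the same factor $(1+x^{p^n-1})$ appeared in both summands. Here the hypothesis is weaker, $x^{3(p^n-1)}=1$, so I first want to understand what constraints this places on $\omega:=x^{p^n-1}$. Since $x\in L^\times$ and $L^\times$ is cyclic of order $p^{2n}-1$, the element $\omega$ is a cube root of unity; the condition $p^n\equiv 2\pmod 3$ is exactly what guarantees $3\mid p^n+1$, hence $3\mid p^{2n}-1$, so primitive cube roots of unity do live in $L$. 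Thus $\omega\in\{1,\zeta,\zeta^2\}$ where $\zeta$ is a primitive cube root of unity. When $\omega=1$ we have $x\in F$, and then $\Tr_{L/F}(x^s-x)=2(x^s-x)$; but $x\in F$ and $s\equiv 1\pmod{p^n-1}$ give $x^s=x$, so the trace vanishes. When $\omega=\zeta$ or $\zeta^2$, I need the argument below.

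The main computation: write $\Tr_{L/F}(x^s-x)=x^s+x^{sp^n}-x-x^{p^n}$. As in \cref{prop1}, $x^{sp^n}=x^{s}\cdot x^{s(p^n-1)}=x^s\cdot x^{(k(p^n-1)+1)(p^n-1)}$, and since $\omega=x^{p^n-1}$ is a cube root of unity, $x^{(p^n-1)^2}=\omega^{p^n-1}$ and more generally $x^{(k(p^n-1)+1)(p^n-1)}=\omega^{k(p^n-1)+1}=\omega\cdot\omega^{k(p^n-1)}=\omega\cdot(\omega^{p^n-1})^{k}=\omega\cdot\omega^{k(p^n-1)}$. So I need to track $\omega^{k(p^n-1)}$: since $3\mid p^n+1$ we have $p^n-1\equiv 1\pmod 3$, hence $\omega^{k(p^n-1)}=\omega^{k}$. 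Therefore $x^{sp^n}=x^s\cdot\omega^{k+1}$ and $x^{p^n}=x\cdot\omega$. This gives
\[
\Tr_{L/F}(x^s-x)=x^s(1+\omega^{k+1})-x(1+\omega).
\]
Now the hypothesis $\gcd(s,p^{2n}-1)=1$ (invertibility of $s$) combined with $p^n\equiv 2\pmod 3$ forces, as explained in \cref{remarkp^n2mod3}, that $k\equiv 1\pmod 3$ or $k\equiv 0\pmod 3$; I expect exactly one of the congruence analyses to make $1+\omega^{k+1}$ and $1+\omega$ both zero (using $1+\omega+\omega^2=0$, so $1+\omega^j=0$ precisely when $j\equiv 1\pmod 3$ for $\omega=\zeta$, or when $j\equiv 2\pmod 3$ for $\omega=\zeta^2$). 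The point is that $s=k(p^n-1)+1\equiv k+1\pmod 3$ is coprime to $3$, so $k+1\not\equiv 0\pmod 3$, i.e. $k+1\equiv 1$ or $2\pmod 3$; in either case one checks that the exponent on $\omega$ in the first term and the exponent $1$ on $\omega$ in the second term land in the residue class that kills $1+\omega^{(\cdot)}$. I would organize this as a short case split on $\omega\in\{\zeta,\zeta^2\}$ and on $k\bmod 3$, each case being a one-line verification that both parenthesized factors vanish.

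The step I expect to be the real (if mild) obstacle is bookkeeping the exponents modulo $3$ cleanly — in particular making sure the reduction $p^n-1\equiv 1\pmod 3$ is used consistently so that $\omega^{k(p^n-1)}$ genuinely simplifies to $\omega^k$, and then confirming that $s$ coprime to $p^{2n}-1$ really does rule out the bad residue $k+1\equiv 0\pmod 3$ that would otherwise leave $1+\omega^{k+1}\neq 0$. Once the exponent arithmetic is pinned down, the proof is a direct factorization exactly parallel to \cref{prop1}, and no deeper input is needed. I would close by remarking, as after \cref{prop1}, that $x^{3(p^n-1)}=1$ has $3(p^n-1)$ solutions in $L$ (since $\gcd(3(p^n-1),p^{2n}-1)=3(p^n-1)$ when $3\mid p^n+1$), which upgrades this into a bound on $|K_{1,s}|$ and hence on $W_{L,s}(1)$.
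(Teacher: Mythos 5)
Your setup is sound and parallels the paper's computation: writing $\omega=x^{p^n-1}$ with $\omega^3=1$, using $p^n-1\equiv 1\pmod 3$ to get $x^{s(p^n-1)}=\omega^{k+1}$, arriving at $\Tr_{L/F}(x^s-x)=x^s(1+\omega^{k+1})-x(1+\omega)$, and invoking invertibility of $s$ to exclude $k\equiv 2\pmod 3$ — all of that matches the paper. But the mechanism you propose for the final vanishing is wrong. You claim that the case analysis will make ``$1+\omega^{k+1}$ and $1+\omega$ both zero,'' citing $1+\omega+\omega^2=0$. That identity gives $1+\omega=-\omega^2$, which is \emph{nonzero} for a primitive cube root of unity $\omega$ in odd characteristic (indeed $1+\omega^j=0$ would force $\omega^j=-1$, impossible since $\omega^j$ has order dividing $3$ and $-1\neq 1$ has order $2$). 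So the ``one-line verification that both parenthesized factors vanish'' that you defer to would in fact fail in every case. This is exactly the point where \cref{prop1} does not port over: there $x^{p^n-1}=-1$ kills the common factor $1+x^{p^n-1}$, whereas here no factor dies.

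The correct mechanism — and what the paper does — is cancellation \emph{between} the two summands rather than vanishing of each. Substituting $x^s=x\,\omega^k$ turns your expression into $x\bigl(\omega^k+\omega^{2k+1}-1-\omega\bigr)$; for $k\equiv 0\pmod 3$ one gets $\omega^k=1$ and $\omega^{2k+1}=\omega$, and for $k\equiv 1\pmod 3$ one gets $\omega^k=\omega$ and $\omega^{2k+1}=\omega^3=1$, so in either case the multiset $\{\omega^k,\omega^{2k+1}\}$ equals $\{1,\omega\}$ and the bracket is zero. With that repair (and your correct observation that $k\equiv 2\pmod 3$ contradicts $\gcd(s,p^{2n}-1)=1$), the proof goes through; your closing remark about the $3(p^n-1)$ solutions and the resulting bound on $W_{L,s}(1)$ is also correct and is how the paper uses this proposition.
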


\begin{proof}

We have that
\begin{align}
\Tr_{L/F}(x^s-x) &= \Tr_{L/F}(x^s)-\Tr_{L/F}(x) \nonumber\\
                &= x^s+x^{sp^n}-(x+x^{p^n}) \nonumber\\
                &=x^{k(p^n-1)+1}(1+x^{(k(p^n-1)+1)(p^n-1)})-x(1+x^{p^n-1}) \nonumber\\
                &=x(x^{k(p^n-1)}+x^{(p^n-1)(2k+1)}-1-x^{(p^n-1)}), \label{trace1}\\ \nonumber
\end{align}
using the relation $x^{(p^n-1)^2}=x^{p^{2n}-1-2(p^n-1)}=x^{-2(p^n-1)}=x^{(p^n-1)}$.

If $k \equiv 0  \pmod{3}$ or $k \equiv 1 \pmod{3}$, then the expression \eqref{trace1} becomes $0$.

If $k \equiv 2 \pmod{3}$, then $s \equiv 0 \pmod{3}$, but $q-1=p^{2n}-1 \equiv 0 \pmod{3}$. So $\gcd(s,q-1) \geq 3$, which is a contradiction.
\end{proof}

\begin{theorem}\label{generalboundtheorem}
For an \bl odd prime $p$\bk, we have the following bounds on $W_{L,s}(a)$:
\begin{itemize}
    \item[(1)] If $a \in L$, then $W_{L,s}(a)\geq-p^n$.
    \item[(2)] If $a \in F$, then $W_{L,s}(a) \geq 0$.
    \item[(3)] In particular, $W_{L,s}(1) \geq p^n$.
                If $p^n \equiv 2 \pmod{3}$, then $W_{L,s}(1) \geq 3p^n$. 
\end{itemize}
\end{theorem}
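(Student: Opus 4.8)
The plan is to derive all three bounds from the identity $W_{L,s}(a) = p^n \cdot |K_{a,s}|/(p^n-1) - p^n$ of \cref{divisibilityp^nlemma}, where $K_{a,s} = \{x \in L^\times : \Tr_{L/F}(x^s - ax) = 0\}$; each claimed inequality then becomes a lower bound on $|K_{a,s}|$ that I will get by exhibiting explicit elements. Part (1) is immediate, since $|K_{a,s}| \geq 0$ gives $W_{L,s}(a) \geq -p^n$.

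For part (2), I would invoke \cref{prop1}. Since $L^\times$ is cyclic of order $p^{2n}-1 = (p^n-1)(p^n+1)$ and $p^n+1$ is even ($p$ odd), $\gcd(2(p^n-1), p^{2n}-1) = 2(p^n-1)$, so $x^{2(p^n-1)}=1$ has exactly $2(p^n-1)$ roots in $L^\times$; the $p^n-1$ of them with $x^{p^n-1}=1$ are precisely $F^\times$, and the remaining $p^n-1$ satisfy $x^{p^n-1}=-1$, hence $x \notin F$. By \cref{prop1}, each of these $p^n-1$ elements lies in $K_{a,s}$ for every $a \in F$, so $|K_{a,s}| \geq p^n-1$ and $W_{L,s}(a) \geq 0$. (Equivalently, one could note that $W_{L,s}(a)$ is a multiple of $p^n$ that is $\geq -p^n$, so it suffices to see $K_{a,s} \neq \emptyset$ for $a \in F$, which \cref{prop1} provides.)

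For part (3), take $a=1$ and collect three families of elements of $K_{1,s}$. Every $x \in F^\times$ works, because $x^{p^n-1}=1$ forces $x^s = x^{k(p^n-1)}x = x$ and hence $\Tr_{L/F}(x^s - x) = \Tr_{L/F}(0) = 0$; this is $p^n-1$ elements. The $p^n-1$ roots of $x^{2(p^n-1)}=1$ with $x^{p^n-1}=-1$ also lie in $K_{1,s}$ by \cref{prop1}, and they are disjoint from $F^\times$, so already $|K_{1,s}| \geq 2(p^n-1)$ and $W_{L,s}(1) \geq p^n$. If moreover $p^n \equiv 2 \pmod 3$, then $3 \mid p^n+1$, so $\gcd(3(p^n-1), p^{2n}-1) = 3(p^n-1)$ and $x^{3(p^n-1)}=1$ has $3(p^n-1)$ roots, all in $K_{1,s}$ by \cref{prop2}. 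The union of the root sets of $x^{2(p^n-1)}=1$ and $x^{3(p^n-1)}=1$ intersects in the root set of $x^{p^n-1}=1$, i.e. $F^\times$, so by inclusion--exclusion it has $2(p^n-1)+3(p^n-1)-(p^n-1) = 4(p^n-1)$ elements, all lying in $K_{1,s}$; hence $W_{L,s}(1) \geq p^n \cdot 4 - p^n = 3p^n$.

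The substantive algebra is already carried out in \cref{prop1} and \cref{prop2}, so the only thing that needs care here is the combinatorial bookkeeping: using the cyclicity of $L^\times$ to count the roots of the three exponent equations exactly, and checking that the families overlap precisely in $F^\times$. I do not anticipate any real obstacle.
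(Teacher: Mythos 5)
Your proof is correct and follows essentially the same route as the paper: all three bounds come from the formula of \cref{divisibilityp^nlemma} together with the explicit elements of $K_{a,s}$ supplied by \cref{prop1}, \cref{prop2}, and $F^\times$. The only difference is that you spell out the inclusion--exclusion showing $|K_{1,s}|\geq 4(p^n-1)$ in the $p^n\equiv 2\pmod 3$ case, a counting step the paper's proof leaves implicit.
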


\begin{remark}
Since $W_{L,s}(a)$ is a sum of roots of unity, $|W_{L,s}(a)| < q$ for nondegenerate $s$. 
\end{remark}

\begin{proof}
Since $|K_{a,s}|\geq 0$, $|W_{L,s}(a)| \geq-q$ for $a \in L$.

If $a \in F$, then by \cref{prop1} there are at least $2(p^n-1)-(p^n-1)=p^n-1$ points in $K_{a,s}$. So $W_{L,s}(a) \geq 0$ by \cref{divisibilityp^nlemma}.

For part $(3)$, if $x \in F$, then $x^s=x$ and $\Tr_{L/F}(x^s-x)=0$. So such $x$ lies in $K_{1,s}$. Combining this fact and \cref{prop1}, there are at least $2(p^n-1)$ points in $K_{1,s}$. Therefore, $W_{L,s}(1) \geq p^n$. Moreover, if $p^n \equiv 2 \pmod{3}$, then there are $3(p^n-1)$ solutions to the equation $x^{3(p^n-1)}=1$, and by \cref{prop2} and \cref{divisibilityp^nlemma}, $W_{L,s}(1) \geq 3p^n$. 
\end{proof}

\section{The Weil Spectrum}
\bl Let $L$ be quadratic extension of a finite field $F$ of order $p^n$, where $p$ is an odd prime. \bk In this section we give formulas for the Weil sum at specific elements \bl of $L$. Let $s$ be an invertible Niho exponent over $L$. \bk We recall that $s=1+k(p^n-1)$ and
as noted in \cref{remarkk}, we can take $2 \leq k \leq p^n$.
Our discussion begins by considering the value of the Weil sum at a root of unity in the field for certain primes $p$. The formula is obtained by realizing the relation between the elements in the set $K_{a,s}=\{x \in L^\times\mid\Tr_{L/F}(x^s-ax)=0\}$ in  \cref{divisibilityp^nlemma} and the root of unity.

\begin{proposition}\label{W1}
\bl Let $s=1+k(p^n-1)$ be an invertible Niho exponent over $L$, where $2 \leq k \leq p^n$. \bk Let $d_1=\gcd(k,p^n+1)$, $d_2=\gcd(k-1,p^n+1)$, and $t$ be a positive integer with  \bl $t\mid p^{n}+1$. \bk Let $\zeta_t$ be a primitive $t$-th root of unity in $L$. 
For $i=1$ or $2$, let
\[
\delta_{i,t}=\begin{cases} 
      1 & \text{if $t \mid \frac{p^n+1}{d_i}$,} \\
      0 & \text{otherwise.}
   \end{cases}
   \]
Then \[
W_{L,s}(\zeta_t)=\begin{cases} 
      p^n(d_1+d_2-2) & \text{if $t=1$,} \\
      p^n(d_1\delta_{1,t}+d_2\delta_{2,t}-1) & \text{otherwise.}
   \end{cases}
\]
\end{proposition}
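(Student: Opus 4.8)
The plan is to evaluate $W_{L,s}(\zeta_t)$ via \cref{divisibilityp^nlemma}, which reduces the problem to counting the set $K_{\zeta_t,s}=\{x\in L^\times : \Tr_{L/F}(x^s-\zeta_t x)=0\}$. The first step is to rewrite the trace condition in terms of the single variable $w:=x^{p^n-1}$, which ranges over the cyclic group $G:=\{w\in L^\times : w^{p^n+1}=1\}$ of order $p^n+1$ as $x$ ranges over $L^\times$, each value of $w$ being attained by exactly $p^n-1$ elements $x$ (the map $x\mapsto x^{p^n-1}$ has kernel $F^\times$). Using $x^{p^{2n}}=x$, hence $x^{p^n}=xw$, and $p^n\equiv-1\pmod{p^n+1}$ so that $\zeta_t^{p^n}=\zeta_t^{-1}$ (since $t\mid p^n+1$), a direct computation with $s=1+k(p^n-1)$ gives
\[
\Tr_{L/F}(x^s)=x\bigl(w^k+w^{1-k}\bigr),\qquad \Tr_{L/F}(\zeta_t x)=x\bigl(\zeta_t+\zeta_t^{-1}w\bigr),
\]
so for $x\neq 0$ the defining condition becomes $w^k+w^{1-k}-\zeta_t-\zeta_t^{-1}w=0$. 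Hence $|K_{\zeta_t,s}|=(p^n-1)N_t$, where $N_t$ counts the $w\in G$ solving this equation, and by \cref{divisibilityp^nlemma} we obtain $W_{L,s}(\zeta_t)=p^n(N_t-1)$.

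Next I would compute $N_t$. Multiplying through by $w^{k-1}$ (valid since $w\neq 0$) converts the equation into $w^{2k-1}-\zeta_t^{-1}w^k-\zeta_t w^{k-1}+1=0$, which factors as $(w^{k-1}-\zeta_t^{-1})(w^k-\zeta_t)=0$. So $N_t=|A_t\cup B_t|$ inside $G$, with $A_t=\{w\in G: w^k=\zeta_t\}$ and $B_t=\{w\in G: w^{k-1}=\zeta_t^{-1}\}$. Since $G$ is cyclic of order $p^n+1$, the homomorphism $w\mapsto w^k$ has kernel of size $d_1$ and image equal to the unique subgroup of order $(p^n+1)/d_1$; as $\zeta_t$ has exact order $t$, it lies in this image exactly when $t\mid(p^n+1)/d_1$, i.e.\ when $\delta_{1,t}=1$, in which case its fibre has size $d_1$. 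Thus $|A_t|=d_1\delta_{1,t}$, and the same argument with $k-1$ in place of $k$ and $\zeta_t^{-1}$ (which also has order $t$) in place of $\zeta_t$ gives $|B_t|=d_2\delta_{2,t}$. For the intersection, $w^k=\zeta_t$ and $w^{k-1}=\zeta_t^{-1}$ together force $w=w^k(w^{k-1})^{-1}=\zeta_t^2$ and then $\zeta_t^{2k-1}=w^k w^{k-1}=1$, i.e.\ $t\mid 2k-1$; conversely if $t\mid 2k-1$ then $w=\zeta_t^2\in G$ lies in both sets. Hence $|A_t\cap B_t|$ equals $1$ if $t\mid 2k-1$ and $0$ otherwise.

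The key arithmetic observation is that invertibility of $s$ kills the intersection term for every $t>1$. Indeed, reducing modulo $p^n+1$ gives $p^n\equiv-1$, hence $s=1+k(p^n-1)\equiv 1-2k\equiv -(2k-1)$, so $\gcd(2k-1,p^n+1)=\gcd(s,p^n+1)$, which is $1$ because $\gcd(s,p^{2n}-1)=1$. Therefore no divisor $t>1$ of $p^n+1$ can divide $2k-1$, so $|A_t\cap B_t|=0$ for all such $t$, whereas for $t=1$ trivially $|A_1\cap B_1|=1$. Inclusion–exclusion then yields $N_1=d_1+d_2-1$ and $N_t=d_1\delta_{1,t}+d_2\delta_{2,t}$ for $t>1$; substituting into $W_{L,s}(\zeta_t)=p^n(N_t-1)$ gives the two claimed formulas. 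I expect the main nuisance to be the bookkeeping in the first step — reducing $\Tr_{L/F}(x^s-\zeta_t x)$ to the clean polynomial identity in $w$ and correctly tracking the exponents modulo $p^{2n}-1$ — while the conceptual core is the factorization together with the fact that $\gcd(2k-1,p^n+1)=1$ is automatic from $s$ being an invertible exponent.
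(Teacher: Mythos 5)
Your argument is correct, and it follows the same overall strategy as the paper's proof of \cref{W1}: reduce to counting $K_{\zeta_t,s}$ via \cref{divisibilityp^nlemma}, split into the two cases $x^s=\zeta_t x$ and $x^s=(\zeta_t x)^{p^n}$, count each case by a gcd computation in a cyclic group of order $p^n+1$, and observe that the two cases overlap only when $t=1$. Two of your sub-arguments, however, are genuinely different from the paper's. First, the paper obtains the dichotomy by comparing norms and traces: since $N_{L/F}(x^s)=N_{L/F}(\zeta_t x)$ and the traces agree, $x^s$ and $\zeta_t x$ share a minimal polynomial over $F$, hence are equal or conjugate. You instead substitute $w=x^{p^n-1}$, reduce the trace condition to the Laurent polynomial identity $w^k+w^{1-k}-\zeta_t-\zeta_t^{-1}w=0$, and factor it as $(w^{k-1}-\zeta_t^{-1})(w^k-\zeta_t)=0$; this has the advantage of giving an exact equivalence in one stroke rather than an implication plus its (easy) converse, and it packages the fibre count $(p^n-1)$ cleanly into the surjection $L^\times\to G$. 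Second, for the overlap the paper shows that a simultaneous solution forces $x\in F^\times$ and $\zeta_t=1$ using $\gcd(s,p^{2n}-1)=1$; you instead compute $|A_t\cap B_t|$ directly and observe that $s\equiv-(2k-1)\pmod{p^n+1}$, so $\gcd(2k-1,p^n+1)=\gcd(s,p^n+1)=1$ rules out the intersection for every $t>1$. Both routes use the invertibility of $s$ in an essential way; yours makes the arithmetic reason (the coprimality of $2k-1$ with $p^n+1$) more explicit, which is a pleasant clarification.
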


\begin{proof}
We compute $|K_{\zeta_t,s}|$ in \cref{divisibilityp^nlemma}. Let $x \in K_{\zeta_t,s}$ then $\Tr_{L/F}(x^s)=\Tr_{L/F}(\zeta_tx)$.
We also have that $N_{L/F}(x^s)=N_{L/F}(\zeta_tx)$, since $\zeta^{p^n+1}_t=1$. Hence, $\zeta_tx$ and $x^s$ satisfy the same degree two minimal polynomial over $F$. So we can consider two cases $x^s=\zeta_tx$ or $x^s=(\zeta_tx)^{p^n}$.
Let $$L^{\times}=\langle g \rangle$$ for some generator $g$ in the field. Then $x=g^i$ for some $i \in \mathbb{Z}_{p^{2n}-1}$. We can pick $\zeta_t=g^{(p^{2n}-1)j/t}$ where $\gcd(j,t)=\gcd(j,p^{2n}-1)=1$.
For the case $x^s=\zeta_tx$, we have that $x^{k(p^n-1)}=\zeta_t$. Then $g^{ik(p^n-1)}=g^{(p^{2n}-1)j/t}$, so \begin{align}\label{w1_1}
    (p^n-1)ik \equiv \frac{(p^{2n}-1)j}{t} \pmod{p^{2n}-1},
\end{align} 
which implies that 
\begin{align}\label{w1_2}
    ik \equiv \frac{(p^n+1)j}{t} \pmod{p^n+1}.
\end{align}
Let $d_1=\gcd(k, p^n+1)$. Then \eqref{w1_2} is solvable if $\frac{p^n+1}{t} \equiv 0 \pmod{d_1}$. If it is solvable then there are $d_1$ solutions. When $t=1$, \eqref{w1_2} is always solvable. Hence \eqref{w1_1} has $d_1(p^n-1)$ solutions if $t=1$, and $d_1\delta_{1,t}(p^n-1)$ solutions otherwise.
Similarly, for the case $x^s=(\zeta_tx)^{p^n}=\zeta^{-1}_tx^{p^n}$, we have that $x^{(k-1)(p^n-1)}=\zeta^{-1}_t=g^{-(p^{2n}-1)j/t}$. Let $d_2=\gcd(k-1,p^n+1)$, then there are $d_2(p^n-1)$ solutions to this case if $t=1$ and $d_2\delta_{2,t}(p^n-1)$ for other values of $t$. 

\bl
For both cases to have simultaneous solutions, we have that $\zeta_t x=x^s=(\zeta_tx)^{p^n}$. This means $x^s=\zeta_t x \in F^{\times}$ and $x^{s(p^n-1)}=1$. \rd Since the power map $x \mapsto x^s$ permutes both $L$ and the subfield $F$, $x^s \in F$ if and only if $x \in F$. Therefore, we have $x \in F^{\times}$. \bk We also note that $\zeta_t=x^{s-1}=x^{k(p^n-1)}=1$. 

Hence, when $t=1$ the solutions for both cases were counted twice for $x \in F^{\times}$. Therefore,

\bk

\[
|K_{\zeta_t,s}|=\begin{cases}
    (p^n-1)(d_1+d_2-1) & \text{if $t=1$,}\\
    (p^n-1)(d_1\delta_{1,t}+d_2\delta_{2,t}) & \text{otherwise.}
\end{cases}
\]
Apply this to the formula for $W_{L,s}(\zeta_t)$ in \cref{divisibilityp^nlemma}, we have 
\[
W_{L,s}(\zeta_t)=\begin{cases} 
      p^n(d_1+d_2-2) & \text{if $t=1$,} \\
      p^n(d_1\delta_{1,t}+d_2\delta_{2,t}-1) & \text{otherwise.}
   \end{cases}
\]
\end{proof}

\begin{remark}
Theorem \ref{generalboundtheorem}(3), can be obtained by \cref{W1}. As noted in \cref{remarkp^n2mod3}, \bl for an odd prime $p$\bk, either $d_1$ or $d_2$ must be divisible by $2$, so $d_1+d_2 \geq 3$.
Moreover, if $p^n \equiv 2 \pmod{3}$, then $d_1+d_2 \geq 5$, and thus, $W_{L,s}(1)=p^n(d_1+d_2-2) \geq 3p^n$.
\end{remark}

From \cref{W1}, we deduce the following corollary for the Weil sum at $a=-1$.

\begin{corollary}\label{W-1}
    Let \bl $p$ be an odd prime, \bk $L$ be a quadratic extension \bl of order $p^{2n}$ \bk over \bl the finite field \bk $F$, $s$ be an Niho exponent \bl over $L$, \bk and $d_1,d_2$ be defined as in \cref{W1}. Then
\[
W_{L,s}(-1)=p^n\left(d_1 \cdot \frac{1+(-1)^{(p^n+1)/d_1}}{2}+d_2 \cdot \frac{1+(-1)^{(p^n+1)/d_2}}{2}-1\right).
\]
If $p^n \equiv 3 \pmod{4}$ \bl and \bk $d_1+d_2=3$, then $W_{L,s}(-1)=2p^n$.
\end{corollary}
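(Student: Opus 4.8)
The plan is to specialize \cref{W1} to the case $t=2$. Since $p$ is odd, $p^n+1$ is even, so $2\mid p^n+1$ and the hypothesis $t\mid p^n+1$ of \cref{W1} is satisfied; moreover the unique primitive second root of unity in $L$ is $-1$, so we may take $\zeta_2=-1$. As $-1\neq 1$, the ``otherwise'' branch of the formula in \cref{W1} applies, giving
\[
W_{L,s}(-1)=W_{L,s}(\zeta_2)=p^n\bigl(d_1\delta_{1,2}+d_2\delta_{2,2}-1\bigr),
\]
where $\delta_{i,2}=1$ if $2\mid (p^n+1)/d_i$ and $\delta_{i,2}=0$ otherwise.

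The first assertion then follows by rewriting the indicator $\delta_{i,2}$ in closed form. Since $(p^n+1)/d_i$ is a nonnegative integer, $(-1)^{(p^n+1)/d_i}=1$ precisely when $(p^n+1)/d_i$ is even, so $\delta_{i,2}=\tfrac12\bigl(1+(-1)^{(p^n+1)/d_i}\bigr)$; substituting this into the display above produces exactly the stated formula for $W_{L,s}(-1)$.

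For the final claim, assume $p^n\equiv 3\pmod 4$ and $d_1+d_2=3$. The congruence gives $4\mid p^n+1$. By the parity observation in \cref{remarkp^n2mod3} (which uses only that $p$ is odd and that $k$, $k-1$ are consecutive), exactly one of $d_1,d_2$ is even; together with $d_1,d_2\geq 1$ and $d_1+d_2=3$ this forces $\{d_1,d_2\}=\{1,2\}$. Now $(p^n+1)/1=p^n+1$ is even, and $(p^n+1)/2$ is even because $4\mid p^n+1$, so $\delta_{1,2}=\delta_{2,2}=1$ and hence $W_{L,s}(-1)=p^n(d_1+d_2-1)=2p^n$.

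I do not anticipate a genuine obstacle here: the only points needing care are that $t=2$ is an admissible choice in \cref{W1} (guaranteed because $p$ odd forces $2\mid p^n+1$, and $-1$ is the unique primitive $2$nd root of unity so no dependence on the choice of root arises) and that $d_1+d_2=3$ pins down $\{d_1,d_2\}=\{1,2\}$ via the known parity of $d_1,d_2$; both are immediate.
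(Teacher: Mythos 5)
Your proposal is correct and is exactly the route the paper intends: the corollary is stated as an immediate specialization of \cref{W1} to $t=2$, $\zeta_2=-1$, with $\delta_{i,2}$ rewritten as $\tfrac12\bigl(1+(-1)^{(p^n+1)/d_i}\bigr)$, and the final claim follows since $4\mid p^n+1$ makes both indicators equal to $1$. (Minor aside: $d_1+d_2=3$ with $d_1,d_2\geq 1$ already forces $\{d_1,d_2\}=\{1,2\}$ without appealing to the parity remark, but that does no harm.)
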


We state a result from Katz in \cite{cyclotomy} that is useful in the next step of our discussion. 

\begin{lemma}\cite[Corollary 3.4]{cyclotomy}\label{cyclotomykatz}
Let $F$ be a finite field of characteristic $p$, and let $L$ be an extension of $F$ with $[L:F]$ a power of a prime $\ell$ distinct from $p$. Let $s$ be degenerate over $F$ but not over $L$. Then $W_{L,s}(1) \equiv |F| \pmod{\ell}$ and $W_{L,s}(a) \equiv 0 \pmod{\ell}$ for every $a \in F\setminus \{1
\}$.
\end{lemma}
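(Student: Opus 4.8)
The plan is to exploit the action of $\mathrm{Gal}(L/F)$ on $L$. Since $[L:F]$ is a power of the prime $\ell$, this Galois group is cyclic of $\ell$-power order, so each of its orbits on $L$ has size a power of $\ell$; modulo $\ell$ the Weil sum will therefore collapse onto the fixed locus of the action, namely the subfield $F$, where degeneracy of $s$ over $F$ forces $x\mapsto x^{s}$ to act as a power of Frobenius.

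First I would set up the orbit decomposition. Let $\phi\colon y\mapsto y^{|F|}$ generate $\mathrm{Gal}(L/F)$; note $\phi\in\mathrm{Gal}(L/\mathbb F_p)$, so the absolute trace, and hence $\mu$, is invariant under every power of $\phi$. Because $a\in F$ is fixed by $\phi$, for every $x\in L$ and every $i$ one has $\mu(\phi^{i}(x)^{s}-a\,\phi^{i}(x))=\mu\bigl(\phi^{i}(x^{s}-ax)\bigr)=\mu(x^{s}-ax)$, so $x\mapsto\mu(x^{s}-ax)$ is constant on $\mathrm{Gal}(L/F)$-orbits. Grouping the defining sum of $W_{L,s}(a)$ by orbits then gives $W_{L,s}(a)=\sum_{O}|O|\,\mu(x_{O}^{s}-ax_{O})$ over orbit representatives $x_{O}$.

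Next I would reduce modulo $\ell$. Each orbit of size greater than $1$ has size divisible by $\ell$, hence contributes an element of $\ell\,\mathbb Z[\zeta_p]$, while the orbits of size $1$ are precisely the $\mathrm{Gal}(L/F)$-fixed points, i.e. the points of $F$. Thus $W_{L,s}(a)\equiv\sum_{x\in F}\mu(x^{s}-ax)\pmod{\ell\,\mathbb Z[\zeta_p]}$. Because $s$ is degenerate over $F$, the restriction of $x\mapsto x^{s}$ to $F$ coincides with a power of the map $y\mapsto y^{p}$, whence $\Tr_{L/\mathbb F_p}(x^{s})=\Tr_{L/\mathbb F_p}(x)$ for $x\in F$ and the sum becomes $\sum_{x\in F}\zeta_p^{\Tr_{L/\mathbb F_p}((1-a)x)}=\sum_{x\in F}\zeta_p^{\Tr_{F/\mathbb F_p}([L:F](1-a)x)}$. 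Here is exactly where $\ell\ne p$ is used: $[L:F]$ is a power of $\ell$, hence coprime to $p$, hence a unit in $F$, so the coefficient $[L:F](1-a)$ vanishes if and only if $a=1$, and orthogonality of the additive characters of $F$ evaluates the sum to $|F|$ when $a=1$ and to $0$ when $a\in F\setminus\{1\}$. Finally, $W_{L,s}(a)\in\mathbb Z$ by Helleseth's integrality theorem (degeneracy over $F$ forces $s\equiv 1\pmod{p-1}$), and $\ell\,\mathbb Z[\zeta_p]\cap\mathbb Z=\ell\,\mathbb Z$ since $\mathbb Z[\zeta_p]$ is $\mathbb Z$-free, so the congruence descends to $\mathbb Z$: $W_{L,s}(1)\equiv|F|\pmod\ell$ and $W_{L,s}(a)\equiv0\pmod\ell$ for $a\in F\setminus\{1\}$.

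The step I expect to be the real crux is the very first one: recognizing that $\mathrm{Gal}(L/F)$ is an $\ell$-group and that $\mu$ is Frobenius-stable is what makes the whole Weil sum, modulo $\ell$, degenerate to a trivial character sum over the subfield $F$. Everything after that is bookkeeping, the only points to watch being that the congruence is a priori valid only in $\mathbb Z[\zeta_p]$ and has to be pushed down to $\mathbb Z$, and that the dichotomy $a=1$ versus $a\ne 1$ genuinely relies on $\ell\ne p$ (which is what makes $[L:F]$ a unit in $F$) — if $\ell=p$ one merely gets $W_{L,s}(a)\equiv 0\pmod\ell$ for every $a\in F$.
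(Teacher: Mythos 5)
Your proof is correct. Note that the paper does not prove this lemma at all --- it is quoted verbatim from \cite[Corollary 3.4]{cyclotomy} --- so there is no in-paper argument to compare against; your write-up is a valid self-contained proof, and it follows what is essentially the standard route (and the spirit of the source): partition the sum over $L$ into $\mathrm{Gal}(L/F)$-orbits, observe that $x\mapsto\mu(x^{s}-ax)$ is orbit-constant because $a$ is Galois-fixed and the absolute trace is Frobenius-invariant, kill the orbits of size $>1$ modulo $\ell$, and evaluate the residual character sum over $F$ using degeneracy of $s$ there. All the delicate points are handled: the reduction of $x\mapsto x^s$ on $F$ to a power of Frobenius, the identity $\Tr_{L/\mathbb F_p}(y)=\Tr_{F/\mathbb F_p}([L:F]\,y)$ for $y\in F$ together with $\ell\neq p$ making $[L:F]$ a unit (so the character is trivial exactly when $a=1$), and the descent of the congruence from $\ell\,\mathbb Z[\zeta_p]$ to $\ell\,\mathbb Z$ via integrality of the Weil sum (degeneracy over $F$ gives $s\equiv 1\pmod{p-1}$, so Helleseth's theorem applies over $L$). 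The only cosmetic remark is that the hypothesis that $s$ is nondegenerate over $L$ is never used in your argument; that is harmless, since the stated congruences hold in the degenerate case as well.
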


Combining this lemma with our previous results, we can deduce the following four values in the Weil spectrum in the cases of \cref{5valuesconjecture}.

\begin{corollary}\label{4valued}
\bl Let $L$ be a quadratic extension of a finite field $F$ of order $p^n$, where $p$ is an odd prime. Let $s=1+k(p^n-1)$ be an invertible Niho exponent over $L$, $d_1=\gcd(k, p^n+1)$, and $d_2=\gcd(k-1,p^n+1)$. 
\begin{itemize}
    \item[(i)] If $d_1+d_2 \geq 5$, then the spectrum of $W_{L,s}(a)$ contains at least $4$ values of the form $\{0, -p^n, 2\alpha p^n, (2\beta+1)p^n\}$ for some integers $\alpha,\beta \geq 1$.
    \item[(ii)] If $d_1+d_2=3$ and $p^n \equiv 11 \pmod{12}$, then the spectrum of $W_{L,s}(a)$ contains $0, -p^n, 2p^n, p^n$.
\end{itemize} 
\bk
\end{corollary}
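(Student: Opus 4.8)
The plan is to produce the asserted Weil-sum values one at a time, combining the explicit evaluations of \cref{W1} and \cref{W-1} with two softer facts: \cref{vanishingp^2n} and a second-power-moment estimate. Write $W_{L,s}(a)=p^{n}h_{a}$ with $h_{a}\in\mathbb{Z}$ and $h_{0}=0$ (\cref{divisibilityp^nlemma}), put $m=p^{n}+1$, and recall that $\gcd(d_{1},d_{2})=1$ while exactly one of $d_{1},d_{2}$ is even (since $m$ is even and one of $k,k-1$ is), so $d_{1}+d_{2}$ is odd. To begin, $0$ lies in the spectrum in all cases by \cref{vanishingp^2n}, and $W_{L,s}(1)=(d_{1}+d_{2}-2)p^{n}$ by \cref{W1} with $t=1$; the integer $d_{1}+d_{2}-2$ is odd, which also follows from \cref{cyclotomykatz} with $\ell=2$ (here $s$ is degenerate over $F$ but not over $L$), so $W_{L,s}(1)$ is an odd multiple of $p^{n}$.

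Next I would show $-p^{n}$ always occurs. Suppose not; then $h_{a}\neq-1$ for all $a$, and since $h_{a}\geq-1$ by \cref{generalboundtheorem}(1) this forces $h_{a}\geq0$ for every $a\in L$. Feeding $\sum_{a\in L^{\times}}h_{a}=p^{n}$ (\cref{powermoment}(i)) and $\sum_{a\in L^{\times}}h_{a}^{2}=p^{2n}$ (relation \eqref{secondpowermomentL}) into the trivial bound $\sum_{a}h_{a}^{2}\leq\bigl(\max_{a}h_{a}\bigr)\sum_{a}h_{a}$ yields $\max_{a}h_{a}\geq p^{n}$, i.e.\ $W_{L,s}(a)\geq p^{2n}=|L|$ for some $a$, contradicting $|W_{L,s}(a)|<|L|$ for the nondegenerate exponent $s$. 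Hence $-p^{n}$ is a value, and together with $0$ this supplies two of the four claimed values in either case.

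For (ii) the remaining two come for free: $d_{1}+d_{2}=3$ gives $W_{L,s}(1)=p^{n}$, and $p^{n}\equiv3\pmod{4}$ with $d_{1}+d_{2}=3$ gives $W_{L,s}(-1)=2p^{n}$ by \cref{W-1}; since $0,-p^{n},p^{n},2p^{n}$ are pairwise distinct, (ii) follows. For (i), $W_{L,s}(1)=(d_{1}+d_{2}-2)p^{n}\geq3p^{n}$ is the odd multiple $(2\beta+1)p^{n}$ with $\beta\geq1$, and it remains to exhibit a positive \emph{even} multiple $2\alpha p^{n}$ with $\alpha\geq1$. Here I would split on whether some $d_{i}$ equals $1$. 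If $d_{1},d_{2}\geq2$, use $W_{L,s}(-1)$: by \cref{W-1} it equals $(d_{1}e_{1}+d_{2}e_{2}-1)p^{n}$ with $e_{i}=\frac{1}{2}\bigl(1+(-1)^{m/d_{i}}\bigr)\in\{0,1\}$; it is nonzero, because each $d_{i}e_{i}\in\{0,d_{i}\}$ with $d_{i}\geq2$ so the sum cannot equal $1$, and it is even by \cref{cyclotomykatz}, hence it has the form $2\alpha p^{n}$ with $\alpha\geq1$ and is distinct from $0$, from $-p^{n}$, and — by parity — from $W_{L,s}(1)$. If instead some $d_{i}=1$, say $d_{1}=1$ (so $d_{2}=d_{1}+d_{2}-1\geq4$ is the even one), pick a prime-power divisor $t>1$ of $m/d_{2}$; such $t$ exists because $d_{2}\mid m$ and $d_{2}\neq m$ (otherwise $s\equiv p^{n}$ would be degenerate), so $m/d_{2}\geq2$. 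Then $\delta_{1,t}=\delta_{2,t}=1$ in \cref{W1} (the first because $t\mid m=m/d_{1}$), so $W_{L,s}(\zeta_{t})=d_{2}p^{n}$ is again a positive even multiple of $p^{n}$, distinct from $0$, $-p^{n}$, and $W_{L,s}(1)=(d_{2}-1)p^{n}$.

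The one delicate point is this last step of (i): the natural candidate $W_{L,s}(-1)$ can vanish when one of $d_{1},d_{2}$ equals $1$, so in that regime the even value must be extracted from a carefully chosen $W_{L,s}(\zeta_{t})$, and one has to verify both that the required divisor $t>1$ of $p^{n}+1$ exists and that the value it produces is $\geq2p^{n}$ with parity (as a multiple of $p^{n}$) opposite to $W_{L,s}(1)$. This divisor bookkeeping is kept manageable by $\gcd(d_{1},d_{2})=1$, which makes ``$t\mid m/d_{1}$ and $t\mid m/d_{2}$'' equivalent to ``$t\mid m/(d_{1}d_{2})$''. Everything else — the $0$ from \cref{vanishingp^2n}, the $-p^{n}$ from the moment estimate, and all of case (ii) — is short.
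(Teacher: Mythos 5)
Your proof is correct, and for the key step of part (i) it takes a genuinely different route from the paper. The agreement: both arguments get $0$ from \cref{vanishingp^2n}, get $-p^n$ from a first-versus-second power-moment comparison combined with \cref{divisibilityp^nlemma} and \cref{generalboundtheorem}(1) (the paper compares $(\sum_a W_{L,s}(a))^2$ with $\sum_a W_{L,s}(a)^2$ after invoking \cref{2values}; you use $\sum_a h_a^2 \leq (\max_a h_a)\sum_a h_a$ plus the bound $|W_{L,s}(a)|<|L|$ --- same idea, different packaging), and handle case (ii) identically via \cref{W1} and \cref{W-1}. The divergence is in producing the positive even value $2\alpha p^n$ in case (i). The paper argues non-constructively: by \cref{firstmoment} with $b=1$, $\sum_{a\in F}W_{L,s}(a)=p^{2n}-1$, which is not divisible by $p^n$, so some $a\in F\setminus\{1\}$ has $W_{L,s}(a)\neq 0$; this value is even by \cref{cyclotomykatz} and nonnegative by \cref{generalboundtheorem}(2). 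You instead exhibit an explicit element achieving the even value: $a=-1$ when $d_1,d_2\geq 2$ (using \cref{W-1} to rule out the value $0$ and \cref{cyclotomykatz} to rule out $-p^n$), and $a=\zeta_t$ for a divisor $t>1$ of $(p^n+1)/d_2$ when some $d_i=1$ (using \cref{W1}, after checking $d_2\neq p^n+1$ via nondegeneracy). Your version buys more information --- it locates the fourth value at a concrete root of unity and even identifies $\alpha$ in the second subcase --- at the cost of a case split and some divisor bookkeeping; the paper's averaging argument is shorter and avoids the cases but says nothing about where the even value occurs. One small presentational point: in your $d_1,d_2\geq 2$ subcase, positivity of $W_{L,s}(-1)$ is most cleanly justified by citing \cref{generalboundtheorem}(2) directly (since $-1\in F$) rather than inferring it from evenness and the lower bound $-p^n$.
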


\begin{proof}
\bl
In both cases: By \cref{2values}, the Weil spectrum contains at least three values, and one of which is $0$ by \cref{vanishingp^2n}. \rd If all the nonzero values were positive, \bk we would have $\left(\displaystyle\sum_{a \in L}W_{L,s}(a)\right)^2 > \displaystyle\sum_{a \in L}W^2_{L,s}(a)$, and this is contradiction to the first and second moments in \cref{powermoment}. Hence, the spectrum must contain at least a negative value. From \cref{divisibilityp^nlemma} and \cref{generalboundtheorem} (part $1$), this negative value must be $-p^n$.

For case (i): Apply \cref{cyclotomykatz} to our setting of the quadratic extension $L$ over $F$, \bl the prime $\ell=[L:F]=2$. Then the Weil sum $W_{L,s}(a)$ admits an odd value for $a=1$ and even values for all $a \in F \setminus \{1\}$. By \cref{W1}, 
 $W_{L,s}(1)=(d_1+d_2-2)p^n \geq 3p^n$. If $W_{L,s}(a)=0$ for all $a \in F \setminus \{1\}$, then taking $b=1$ in \cref{firstmoment} yields $p^{2n}=\displaystyle\sum_{a \in F}W_{L,s}(a)=W_{L,s}(1)$. \rd Together with the value $-p^n$, this would mean $\displaystyle\sum_{a \in L}W_{L,s}^2(a) > p^{4n}$, contradicting to the second power moment or relation \bll \eqref{secondpowermoment}\bk. Hence, there is a nonzero even value for some $a \in F \setminus \{1\}$. Therefore, the Weil spectrum consists of $-p^n,0,2\a p^n$, and $(2\beta+1)p^n$, where $\a,\b \geq 1$.
 
 For case (ii): By \cref{W1}, 
 $W_{L,s}(1)=(d_1+d_2-2)p^n = p^n$. By \cref{W-1}, $W_{L,s}(-1)=2p^n$. Hence the Weil spectrum in this case consists of $-p^n, 0, p^n$, and $2p^n$.
\end{proof}

\bk

Our numerical evidence suggests a stronger conclusion than \cref{4valued} implies. This leads to \cref{5valuesconjecture} and \cref{5valuesconjecturep^n2mod3}.

For the rest of the paper we discuss partial results towards \cref{5valuesconjecture} for certain families of Niho exponents $s$. For such an $s$, we give a count on the solution set $R=\{x \in L\mid (1-x)^s+x^s-1=0\}$ in \cref{powermoment}.

\begin{lemma}\label{xx^slemma}
Let $L$ be a quadratic extension of $F$, and $|F|=p^n$ and $k \geq 2$. Let $d_1=\gcd(k,p^n+1)$ and $d_2=\gcd(k-1,p^n+1)$. Then $|R|=p^n+(d_1-1)(d_1-2)+(d_2-1)(d_2-2)$.
\end{lemma}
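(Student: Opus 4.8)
\textbf{Proof plan for \cref{xx^slemma}.}

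The plan is to count solutions $x \in L$ to $(1-x)^s + x^s - 1 = 0$ by splitting into the trivial cases and the nontrivial cases, mirroring the structure of the proof of \cref{W1}. First I would dispose of the easy solutions: if $x \in \{0, 1\}$ then the equation holds trivially, contributing two solutions. Next, for $x \in F \setminus \{0,1\}$ we have $x^s = x$ and $(1-x)^s = 1-x$ since $s \equiv 1 \pmod{p^n-1}$, so every element of $F$ is a solution; this accounts for $p^n$ solutions in total including $0$ and $1$. The remaining work is to count solutions with $x \in L \setminus F$, and the claim is effectively that there are $(d_1-1)(d_1-2) + (d_2-1)(d_2-2)$ of these.

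For $x \notin F$, I would use the Niho structure. Write $L^\times = \langle g \rangle$ and recall $s = 1 + k(p^n-1)$. The key identity, as in \cref{prop1} and \cref{prop2}, is that for $x \notin F$ one has $x^{p^n} = x \cdot x^{p^n-1}$ with $x^{p^n-1}$ a nontrivial $(p^n+1)$-th root of unity; set $u = x^{p^n-1}$, so $u \neq 1$ and $u^{p^n+1} = 1$. Then $x^s = x^{1+k(p^n-1)} = x \cdot u^k$, and similarly $(1-x)^s = (1-x) \cdot v^k$ where $v = (1-x)^{p^n-1} = (1-x^{p^n})/(1-x) \cdot (\text{sign adjustments})$ — more precisely $(1-x)^{p^n-1} = (1 - x^{p^n})/(1-x) = (1 - xu)/(1-x)$. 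Substituting into the equation gives
\[
(1-x)\left(\frac{1-xu}{1-x}\right)^k + x\, u^k - 1 = 0.
\]
The idea is that this is really a condition forcing $u$ and the related root of unity to be high-order roots of unity dividing $\gcd(k-1, p^n+1)$ and $\gcd(k, p^n+1)$ respectively, after which a careful count of $x$ lying over each admissible $u$ produces the quadratic expressions $(d_1-1)(d_1-2)$ and $(d_2-1)(d_2-2)$. I expect the correct route is to parametrize $x \in L \setminus F$ by the pair $(x, x^{p^n})$ and observe that $(1-x)^s + x^s = 1$ together with its $p^n$-power conjugate $(1-x^{p^n})^s + (x^{p^n})^s = 1$ forces $\{(1-x)^s, (x^s)\}$ to be related to $\{1 - x, x\}$ or their conjugates — essentially, either $x^s = x$ and $(1-x)^s = 1-x$ (the $F$ case), or $x^s$ equals the conjugate $x^{p^n}$ and correspondingly for $1-x$, splitting into the two gcd conditions exactly as the two cases $x^s = \zeta_t x$ versus $x^s = (\zeta_t x)^{p^n}$ did in \cref{W1}.

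The main obstacle will be the combinatorial bookkeeping in the $x \notin F$ case: one must correctly identify which $(p^n+1)$-th roots of unity $u$ arise, count the number of $x \in L \setminus F$ with $x^{p^n-1} = u$ satisfying the equation (generically $d_i - 1$ or so values, since $x$ is determined up to an $F^\times$-scaling constrained by the equation), and then subtract off overcounting coming from the trivial solutions $x = 0, 1$ and from the overlap with $F$ — the precise reason the answer is $(d_i-1)(d_i-2)$ rather than $(d_i-1)^2$ or $d_i(d_i-1)$ is exactly this inclusion–exclusion, and getting the endpoints right is where care is needed. Once the count of nontrivial solutions over $L \setminus F$ is pinned down as $(d_1-1)(d_1-2) + (d_2-1)(d_2-2)$, adding the $p^n$ solutions in $F$ gives the stated formula $|R| = p^n + (d_1-1)(d_1-2) + (d_2-1)(d_2-2)$.
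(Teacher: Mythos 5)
Your outline matches the paper's overall strategy --- count the $p^n$ solutions in $F$, then show every $x \in R \setminus F$ falls into one of two cases governed by $d_1$ and $d_2$ --- but as written it leaves the load-bearing steps undone. First, the dichotomy. You gesture at comparing the equation with its $p^n$-power conjugate to ``force'' $x^s$ to be $x$ or $x^{p^n}$, but that comparison alone does not visibly do it. The paper's argument is concrete and you should pin it down: from $(1-x)^s = 1-x^s$ take $N_{L/F}$ of both sides; since $s(p^n+1)\equiv p^n+1 \pmod{p^{2n}-1}$ one has $N_{L/F}(x^s)=N_{L/F}(x)$, and expanding both norms then yields $\Tr_{L/F}(x^s)=\Tr_{L/F}(x)$. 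Equal norm and trace means $x$ and $x^s$ satisfy the same quadratic minimal polynomial over $F$, hence $x^s=x$ or $x^s=x^{p^n}$. Note also that your parenthetical ``(the $F$ case)'' attached to $x^s=x$ is misleading: $x^s=x$ means $x^{k(p^n-1)}=1$, which has $d_1(p^n-1)$ solutions, almost all outside $F$; it is precisely this case with $x\notin F$ that produces the $(d_1-1)(d_1-2)$ term, not a case to be absorbed into the $p^n$ solutions in $F$.

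Second, the count itself, which you explicitly defer. In Case 1 ($x^s=x$) both $x^{k(p^n-1)}=1$ and, from the original equation, $(1-x)^{k(p^n-1)}=1$; so $x^{p^n-1}=h^{t_1}$ and $(1-x)^{p^n-1}=h^{t_2}$ where $h$ has order $d_1$ and $t_1,t_2\in\{1,\dots,d_1-1\}$ because neither $x$ nor $1-x$ lies in $F$. Writing $(1-x)^{p^n-1}=(1-x^{p^n})/(1-x)=(1-h^{t_1}x)/(1-x)$ and solving gives $x=(1-h^{t_2})/(h^{t_1}-h^{t_2})$, which forces $t_1\neq t_2$; that is where $(d_1-1)(d_1-2)$ comes from --- it is not an inclusion--exclusion against $x=0,1$. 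Third, and most substantively, this only bounds the number of solutions outside $F$ from above: to get equality you must verify that each candidate $x=(1-h^{t_2})/(h^{t_1}-h^{t_2})$ actually satisfies $(1-x)^s+x^s=1$ --- the paper does this by computing $(h^{t_1}-h^{t_2})^s=(-1)^k(h^{t_1}-h^{t_2})$ and the analogous identities for the numerator, using $h^k=h^{p^n+1}=1$ --- and you must check that the two cases produce disjoint solution sets, which follows from $\gcd(d_1,d_2)=1$ since $k$ and $k-1$ are coprime. None of these appear in your plan, so as it stands it is a correct outline of the paper's approach rather than a proof.
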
 

\begin{proof}
Clearly, all elements in $F$ are in $R$. So $|R| \geq p^n$. Now suppose $x \in R \setminus F$.

We have that $(1-x)^s=1-x^s$. Computing the norm $N_{L/F}$ of both sides, we get
\[
N_{L/F}(1-x^s)=1-x^{sp^n}-x^s+x^{s(p^n+1)}=1-\Tr_{L/F}(x^s)+N_{L/F}(x^s)
\]
and
\[
N_{L/F}((1-x)^s)=1-x^{p^n}-x+x^{p^n+1}=1-\Tr_{L/F}(x)+N_{L/F}(x).
\]

As $s=1+k(p^n-1)$, we know $N_{L/F}(x)=N_{L/F}(x^s)$. Equating the norm of $1-x^s$  gives us $\Tr_{L/F}(x)=\Tr_{L/F}(x^s)$.

Since the norm and trace of $x$ and $x^s$ are the same, they must satisfy the same degree-two minimal polynomial over $F$. Hence $x^s=x$ or $x^s=x^{p^n}$. 

\textbf{Case 1}: $x^s=x$.

This implies $x^{k(p^n-1)}=1$. Since $x \notin F$, $x^{p^n-1} \neq 1$. Now, $1=x^{p^{2n}-1}=x^{(p^n-1)(p^n+1)}$. So a solution in this case must satisfy $x^{d_1({p^n-1})}=1$, where $d_1=\gcd(k,p^n+1)$. Let $L^{\times}= \langle g \rangle$ and $h=g^{(p^{2n}-1)/d_1}$ be an element of order $d_1$ in $L^\times$. Then $x^{p^n-1}$ must be in $\langle h \rangle$. Without loss of generality, let $x^{p^n-1}=h^{t_1}$, where $ 1\leq t_1 \leq d_1-1$. 
On the other hand, $1=x^s+(1-x)^s=x+(1-x)^s$. This implies $(1-x)^{s-1}=1$ or $(1-x)^{k(p^n-1)}=1$. Using the similar argument from above, we can say $(1-x)^{p^n-1}=h^{t_2}$, where $1 \leq t_2 \leq d_1-1$. \bll Since $x \notin F$, $t_1 \neq t_2$. \bk

Now,
\begin{align}\label{p^n-1}
     (1-x)^{p^n-1} = h^{t_2} 
    &\implies 1-x^{p^n} = h^{t_2}(1-x)\\ \nonumber
    &\implies 1-h^{t_1}x =h^{t_2}-h^{t_2}x\\ \nonumber
    &\implies  x=\frac{1-h^{t_2}}{h^{t_1}-h^{t_2}}.\nonumber 
\end{align} 
With $1 \leq t_1,t_2 \leq d_1-1$ and $t_1 \neq t_2$, there are $(d_1-1)(d_1-2)$ choices for solution $x$. \bll To see all these choices are distinct, suppose there are integer pairs $(t_1,t_2), (u_1,u_2)$, where $1 \leq t_1,t_2,u_1,u_2 \leq d_1-1$, $t_1 \neq t_2$, and $u_1 \neq u_2$, satisfying
\begin{align*}
x=\frac{1-h^{t_2}}{h^{t_1}-h^{t_2}}=\frac{1-h^{u_2}}{h^{u_1}-h^{u_2}}.
\end{align*}
Since $h$ has order $d_1=\gcd(k,p^n+1)$, $h^k=h^{p^n+1}=1$. Thus, $h^{p^n}=h^{-1}$. Using this fact we compute
\begin{align*}
x^{p^n}&=\left(\frac{1-h^{t_2}}{h^{t_1}-h^{t_2}}\right)^{p^n} =\frac{1-h^{t_2p^n}}{h^{t_1p^n}-h^{t_2p^n}}=\frac{1-h^{-t_2}}{h^{-t_1}-h^{-t_2}}=h^{t_1}x.
\end{align*}
Hence, $x^{p^n-1}=h^{t_1}$ and similarly, we find $x^{p^n-1}=h^{u_1}.$ This means $t_1=u_1$ since $h$ has order $d_1$. From here, we can reverse the implications of \eqref{p^n-1} to show $h^{t_2}=(1-x)^{p^n-1}=h^{u_2}$, which means $t_2=u_2$.

\bk

Now, to show such $(d_1-1)(d_1-2)$ choices are in $R$, consider $x=\displaystyle\frac{1-h^{t_2}}{h^{t_1}-h^{t_2}}$, where $h$ is defined as above, and $1 \leq t_1, t_2 \leq d_1-1$.

We have that
\begin{align}\label{denom}
  (h^{t_1}-h^{t_2})^s=(h^{t_1}-h^{t_2})^{(1+k(p^n-1))} &= (h^{t_1}-h^{t_2})^{(1-k)}(h^{t_1p^n}-h^{t_2p^n})^k \\ \nonumber
  &= (h^{t_1}-h^{t_2})^{(1-k)}(h^{-t_1}-h^{-t_2})^k \\ \nonumber
  &=h^{-t_1k}h^{-t_2k}(h^{t_1}-h^{t_2})^{(1-k)}(h^{t_2}-h^{t_1})^k \\ \nonumber
  &=(h^{k})^{-t_1}(h^{k})^{-t_2}(-1)^k(h^{t_1}-h^{t_2}) \\ \nonumber
  &=(-1)^k(h^{t_1}-h^{t_2}) \\
\end{align}

Similarly, we can show that
\begin{align}\label{num1}
    (h^{t_1}-1)^s=(-1)^k(h^{t_1}-1),
\end{align}
and
\begin{align}\label{num2}
(1-h^{t_2})^s=(-1)^k(1-h^{t_2}).
\end{align}

Now, 
\begin{align*}
    (1-x)^s+x^s-1 &= \left(1-\frac{1-h^{t_2}}{h^{t_1}-h^{t_2}}\right)^s+\left(\frac{1-h^{t_2}}{h^{t_1}-h^{t_2}}\right)^s-1 \\
    &=\left(\frac{(h^{t_1}-1)^s+(1-h^{t_2})^s}{(h^{t_1}-h^{t_2})^s}\right)-1=0,
\end{align*}
with the last equality following from relations \eqref{denom}, \eqref{num1}, and \eqref{num2}.
Hence, such choice $x$ is in $R$.

\bk

\textbf{Case 2}: $x^{p^n}=x^s=x^{1+k(p^n-1)}$.

This implies $x^{(k-1)(p^n-1)}=1$. Suppose $x \notin F$. Similar to the argument in case $1$ we let $d_2=\gcd(k-1,p^n+1)$ and find solutions to the equation $x^{d_2(p^n-1)}=1$. \bl Let $\ell=g^{(p^{2n}-1)/d_2}$ be an element of order $d_2$, where $g$ is the generator of $L^{\times}$ as in case $1$. A quick check also yields us the relation $(1-x)^{(k-1)(p^n-1)}=1$.Then $x^{p^n-1}$ and $(1-x)^{p^n-1}$ must be in $\langle \ell \rangle$. Using the similar argument from above, we obtain $x=\displaystyle\frac{1-\ell^{r_2}}{\ell^{r_1}-\ell^{r_2}}$, where $1 \leq r_1,r_2 \leq d_2-1$.
There are $(d_2-1)(d_2-2)$ choices for such $x$. \bll Using similar arguments as in Case 1, we can show that all these choices are distinct.

\bk

In the reverse direction, we first note that $\ell^{k}=\ell$ and $\ell^{p^n}=\ell^{-1}$, since $\ell$ has order $d_2=\gcd(k-1,p^n+1)$.

Using a similar argument as in case $1$, we have that
\begin{align}\label{denomcase2}
  (\ell^{r_1}-\ell^{r_2})^s &= (\ell^{r_1}-\ell^{r_2})^{(1-k)}(\ell^{r_1p^n}-\ell^{r_2p^n})^k \\ \nonumber
  &= (\ell^{r_1}-\ell^{r_2})^{(1-k)}(\ell^{-r_1}-\ell^{-r_2})^k \\ \nonumber
  &=\ell^{-r_1k}\ell^{-r_2k}(\ell^{r_1}-\ell^{r_2})^{(1-k)}(\ell^{r_2}-\ell^{r_1})^k \\ \nonumber
  &=(-1)^k\ell^{-r_1}\ell^{-r_2}(\ell^{r_1}-\ell^{r_2}) \\ \nonumber
  &=(-1)^k(\ell^{-r_2}-\ell^{-r_1}).
\end{align}

Similarly, we can show that
\begin{align}\label{num1case2}
    (\ell^{r_1}-1)^s=(-1)^k(1-\ell^{-r_1}),
\end{align}
and
\begin{align}\label{num2case2}
(1-\ell^{r_2})^s=(-1)^k(\ell^{-r_2}-1).
\end{align}

By relations \eqref{denomcase2}, \eqref{num1case2}, and \eqref{num2case2}, we can show that $(1-x)^s+x^s-1=0$. Thus, such choice $x$ is in $R$.

\bk

Hence there are $(d_2-1)(d_2-2)$ solutions of $x$ in this case.

Note that since $k$ and $k-1$ are coprime, $d_1$ and $d_2$ are coprime as well. Therefore, the solutions in case $1$ and case $2$ for $x \notin F$ are distinct. Accounting for solutions $x \in F$ we have $|R|=p^n+(d_1-1)(d_1-2)+(d_2-1)(d_2-2)$.
\end{proof}

\begin{corollary}
Let $s=1+k(p^n-1)$ for some integer $k$, $0 \leq k \leq p^n$. Then $k$ and $2-k+p^n$ gives the same number of solutions to the equation $(1-x)^s+x^s-1=0$ for $x \in L$.
\end{corollary}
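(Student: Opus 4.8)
The plan is to read the claim directly off the formula $|R| = p^n + (d_1-1)(d_1-2) + (d_2-1)(d_2-2)$ of \cref{xx^slemma}, after noticing that passing from $k$ to $k' := 2-k+p^n$ merely interchanges $d_1$ and $d_2$, which leaves the right-hand side (symmetric in $d_1,d_2$) unchanged.

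Concretely, first I would record the two gcd computations. Since $p^n \equiv -1 \pmod{p^n+1}$, we have $k' \equiv 1-k \pmod{p^n+1}$ and $k'-1 \equiv -k \pmod{p^n+1}$, so $\gcd(k',p^n+1) = \gcd(1-k,p^n+1) = \gcd(k-1,p^n+1) = d_2$ and $\gcd(k'-1,p^n+1) = \gcd(k,p^n+1) = d_1$. Thus the pair attached to $k'$ is $(d_1',d_2') = (d_2,d_1)$, and \cref{xx^slemma} gives the number of solutions for $k'$ as $p^n + (d_2-1)(d_2-2) + (d_1-1)(d_1-2)$, which equals $|R|$.

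The only point requiring care is the hypothesis $k \geq 2$ in \cref{xx^slemma}. For $2 \leq k \leq p^n$ one checks immediately that $2 \leq k' = 2-k+p^n \leq p^n$ as well, so the lemma applies to both $k$ and $k'$ and the argument above finishes the proof. The remaining values are $k=0$ and $k=1$, where $s$ is degenerate (namely $s \equiv 1$ and $s \equiv p^n \pmod{p^{2n}-1}$, respectively); for these, $(1-x)^s+x^s-1$ vanishes identically on $L$ by $(1-x)^{p^n} = 1-x^{p^n}$, so the solution count is $p^{2n}$, while $k' = p^n+2$ and $k' = p^n+1$ respectively. Since $k$ and $k+(p^n+1)$ determine the same exponent $s$ modulo $p^{2n}-1$ (hence the same solution set), these values of $k'$ reduce back to $k=1$ and $k=0$, again giving the count $p^{2n}$. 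Alternatively, one may observe that the formula of \cref{xx^slemma} already evaluates to $p^{2n}$ when $k \in \{0,1\}$ (with $d_1=p^n+1$ or $d_2=p^n+1$), so that no separate case analysis is strictly necessary once the gcd bookkeeping is done.

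I do not expect a genuine obstacle here: the entire content is the gcd computation together with the symmetry of $(d-1)(d-2)$ in $d_1$ and $d_2$, and the only thing to watch is the endpoint bookkeeping at $k \in \{0,1\}$ and the reduction of $k'$ modulo $p^n+1$.
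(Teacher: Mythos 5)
Your proof is correct, but it takes a different route from the paper's. The paper's proof does not use the closed-form count at all: it observes that the exponents $s$ and $sp^n$ have the same solution set (since $(1-x)^{sp^n}+x^{sp^n}-1=\bigl((1-x)^s+x^s-1\bigr)^{p^n}$ and Frobenius is injective), and then checks that $sp^n\equiv 1+(1-k)(p^n-1)\pmod{p^{2n}-1}$, i.e.\ corresponds to $k'=1-k\equiv 2-k+p^n\pmod{p^n+1}$. You instead read the statement off the formula $|R|=p^n+(d_1-1)(d_1-2)+(d_2-1)(d_2-2)$ of \cref{xx^slemma}, after the (correct) computation that $k\mapsto 2-k+p^n$ interchanges $d_1$ and $d_2$ modulo $p^n+1$, so the symmetric expression is unchanged. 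The two arguments are cousins --- the $d_1\leftrightarrow d_2$ symmetry of the formula is exactly the shadow of the $s\leftrightarrow sp^n$ symmetry swapping Cases 1 and 2 in the lemma's proof --- but the paper's version is slightly stronger and cleaner: it shows the solution \emph{sets} coincide, not merely their cardinalities, and it needs no separate treatment of the endpoints $k\in\{0,1\}$ where the hypothesis $k\ge 2$ of \cref{xx^slemma} fails. Your handling of those endpoints (degenerate $s$, identically vanishing polynomial, count $p^{2n}$, and reduction of $k'$ modulo $p^n+1$) is nevertheless correct and closes the gap, so the proposal stands as a complete alternative proof.
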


\begin{proof}
From the proof of \cref{xx^slemma}, the exponents  $s$ and $sp^n$ give the same number of solutions to the equation $(1-x)^s+x^s-1=0$ for $x \in L$. Now,  $sp^n \equiv 1+(1-k)(p^n \rd -1) \pmod{p^{2n}-1}$ \bk, and $0 \leq 2-k+p^n \leq p^n$ gives the same exponent \rd modulo $(p^{2n}-1)$ \bk as $1-k$ \rd over $L$. \bk
\end{proof}

\bl 

Finally, we end our discussion by giving some partial results towards \cref{5valuesconjecture}.
The idea behind the proofs of \cref{5valuesconjecturep^n3mod4} and \cref{5valuesconjecturep^n2mod3} is to apply the power moments in \cref{powermoment} to the four Weil sum values and the count for the set $R$ in \cref{xx^slemma} to derive a contradiction.

\bk

\begin{proof}[Proof of Theorem \ref{theoremM_i}]
\bl According to \cref{4valued}, there are four values in the Weil spectrum. Suppose that these are the only ones in the spectrum. \bk Let $m_1,m_2,m_3$ and $m_4$ be the number of elements whose Weil sum value is $-p^n, 0, 2\alpha p^n$ and $(2 \beta +1)p^n$, respectively, for integers $\alpha, \beta \geq 1$. In here $2 \beta+1=d_1+d_2-2$ from \cref{W1}. 
By \cref{powermoment} we have the following system of equations: 
\begin{numcases}
 \ m_1+m_2+m_3+m_4=p^{2n} \\ \label{1}
 -m_1+2\alpha m_3 + (d_1+d_2-2)m_4=p^n \\ \label{2}
 m_1+4{\alpha}^2 m_3+(d_1+d_2-2)^2 m_4 =p^{2n}\\ \label{3}
 -m_1+8{\alpha}^3 m_3 +(d_1+d_2-2)^3 m_4=p^n |R|, \label{4}
\end{numcases}
where $|R|=p^n+(d_1-1)(d_1-2)+(d_2-1)(d_2-2)$. 

The above system has a unique solution over $\mathbb{Q}$, which is
\begin{align*}
m_1 &= \bl -\displaystyle\frac{p^n(d_1^2 + d_2^2+(2\a-p^n-3)(d_1+d_2) - p^n(2\a-3) + 4(1-\a))}{(2\a + 1)(d_1 + d_2 - 1)} \bk \nonumber \\
m_2 &=\displaystyle\frac{1}{2} \cdot\displaystyle\frac{p^n((d_1+d_2)(2\a(p^n+1)-p^n-4)+d_1^2+d_2^2-6\a(p^n+1)+2(2p^n+3))}{\a(d_1 + d_2 - 2)} \nonumber \\
m_3 &=\displaystyle\frac{1}{2} \cdot\displaystyle\frac{p^n(d_1^2 + d_2^2 - (p^n+4)(d_1 + d_2) + 2(2p^n + 3))}{(2\a - d_1 - d_2 + 2)(2\a + 1)\a} \nonumber \\
m_4 &= -\displaystyle\frac{p^n(d_1^2 + d_2^2 - 2p^n(\a-1) - 3(d_1 + d_2) + 4-2\a)}{(2\a - d_1 - d_2 + 2)(d_1 + d_2 - 1)(d_1 + d_2 - 2)}.
\end{align*}
\bl 
From the numerator of $m_3$, we have that 
\begin{align}
d_1^2 + d_2^2 - (p^n+4)&(d_1 + d_2) + 2(2p^n + 3) \nonumber \\ &=(d_1-2)^2+(d_2-2)^2-(d_1+d_2-4)p^n \bl - 2 \nonumber \\
&\leq {(d_1-2)^2+(d_2-2)^2-p^n \bl - 2}, \label{m1} 
\end{align}
since $d_1+d_2-4\geq 1$. 
Note that since $k < \displaystyle\frac{p}{2}+1$, $d_1<\displaystyle\frac{p}{2}+1$ and $d_2 <\displaystyle\frac{p}{2}$. Hence, we can bound \eqref{m1} by  $\left(\displaystyle\frac{p}{2}-1\right)^2+\left(\displaystyle\frac{p}{2}-2\right)^2-p^n-2 = \displaystyle\frac{1}{2}p^2-3p-p^n+3 \leq -\displaystyle\frac{1}{2}p^2-3p+3<0$, since $n \geq 2$. So the numerator of $m_3$ is negative. Thus, the denominator of $m_3$, i.e $2(2\a - d_1 - d_2 + 2)(2\a + 1)\a$ is negative, which implies the factor $(2\a - d_1 - d_2 + 2)<0$.

Now, this forces the denominator of $m_4$ to be negative, which implies that the expression $(d_1^2 + d_2^2 - 2p^n(\a-1) - 3(d_1 + d_2) + 4-2\a)$ in the numerator of $m_4$ must be positive. If $\a \geq 2$, using the bounds for $d_1$ and $d_2$, we can bound the numerator of $m_4$ by
\begin{align*}
    (d_1^2 + d_2^2 - 2p^n(\a-1) &- 3(d_1 + d_2) + 4-2\a) \\ &\leq d_1^2+d_2^2-2p^n-3(d_1+d_2) \\
    &=\left(d_1-\frac{3}{2}\right)^2+\left(d_2-\frac{3}{2}\right)^2-\frac{9}{2}-2p^n\\
    &< \left(\frac{p+2}{2}-\frac{3}{2}\right)^2+\left(\frac{p}{2}-\frac{3}{2}\right)^2-\frac{9}{2}-2p^n\\
    &=\frac{1}{2}p^2-2p-2p^n-2 \leq -\frac{1}{2}p^2-2p-2<0,
\end{align*}
which is a contradiction.

Hence, $\a$ must be 1. 
Replacing this for $m_4$, we have $$m_4=\displaystyle\frac{1}{2}\displaystyle\frac{p^n(d_1^2+d_2^2-3(d_1+d_2)+2)}{(d_1+d_2-4)(d_1+d_2-2)(d_1+d_2-1)}.$$
Observe that the factors in the denominator $(d_1+d_2-4)<(d_1+d_2-2)<(d_1+d_2-1)<\displaystyle\frac{p+2}{2}+\frac{p}{2}-1=p$. Moreover, $(d_1+d_2-2), (d_1+d_2-1) \geq 3$, so these two factors do not divide $p$. Hence, for $m_4$ to be an integer, they must divide $d_1^2+d_2^2-3(d_1+d_2)+2$. 

However, 
\begin{align*}
    d_1^2+d_2^2-3(d_1+d_2)+2&=(d_1+d_2-4)(d_1+d_2-1)-2(d_1-1)(d_2-1)\\
    &<(d_1+d_2-2)(d_1+d_2-1).\\
\end{align*}
This is a contradiction.

Therefore, there must be a fifth value in this Weil spectrum.
\end{proof}

\bl 

\begin{remark}
For the case $n=1$ in \cref{theoremM_i}, taking odd prime p such that $p^{1/2}>2(k-1)$, and following the argument in the proof of \cref{theoremM_i} with this bound would yield the same conclusion (i.e the Weil spectrum has at least five values).
\end{remark}

\bl

\begin{proof}[Proof of Theorem \ref{5valuesconjecturep^n3mod4}]
This proof is in a similar flavor to the proof of \cref{theoremM_i}. Since $p^n \equiv 11 \pmod{12}$, $p^n \equiv 3 \pmod{4}$. By \cref{W-1}, $W_{L,s}(-1)=2p^n$. Since $d_1+d_2=3$, $W_{L,s}(1)=p^n$ by \cref{W1}. As in the last proof, let $m_1,m_2,m_3$ and $m_4$ be the number of elements whose Weil sum value is $-p^n, 0, 2\alpha p^n$ and $(2 \beta +1)p^n$, but here we take $\a=1,\b=0$, specifically.
Now, $d_1+d_2=3$, where $d_1, d_2 \geq 1$ (since $k \geq 2$) implies that one of them is $1$ and the other one is $2$. Hence, $d_1^2+d_2^2=5$.
Replacing these values of $\a,\b, d_1+d_2=3$, and $d_1^2+d_2^2=5$ in the solutions of $m_1,m_2,m_3$, and $m_4$ in the proof of \cref{theoremM_i}, we obtain $
m_1 = \displaystyle\frac{p^n(p^n-1)}{3}, 
m_2 =\displaystyle\frac{p^n(p^n-1)}{2}, 
m_3 =\displaystyle\frac{p^n(p^n-1)}{6},$ and $m_4 = p^n.$

Since $p^n \equiv 11 \pmod{12}$, $p \neq 3$. Since $m_1$ must be an integer, $p^n \equiv 1 \pmod{3}$, but this is a contradiction. Therefore, there exists a fifth value in this Weil spectrum.

\end{proof}

\bk

\section*{Acknowledgements}
The author would like to thank  Qing Xiang for suggesting the problem, Daniel Katz, \bl Wen-Ching Winnie Li, \bk Fang-Ting Tu and Bao Pham for helpful comments on an earlier draft to this paper, and Ling Long for fruitful discussions and feedback to this project. \bl The author is also grateful for the elaborate comments from the anonymous referee. All suggestions have improved this paper greatly. \bk

\bibliographystyle{amsplain}
\bibliography{ReferenceFull}

\end{document}